\author{Florin Ambro} 
\address{Institute of Mathematics ``Simion Stoilow'' of the Romanian
Academy\\
P.O. BOX 1-764, RO-014700 Bucharest\\ 
Romania.}
\email{florin.ambro@imar.ro}
\newcommand{\isoto}{{\overset{\sim}{\rightarrow}}}
\newcommand{\Q}{{\mathbb Q}}
\newcommand{\Z}{{\mathbb Z}}
\newcommand{\N}{{\mathbb N}}
\newcommand{\R}{{\mathbb R}}
\newcommand{\bP}{{\mathbb P}} 
\newcommand{\bA}{{\mathbb A}} 
\newcommand{\cA}{{\mathcal A}}
\newcommand{\cF}{{\mathcal F}}
\newcommand{\cH}{{\mathcal H}}
\newcommand{\cI}{{\mathcal I}}
\newcommand{\cK}{{\mathcal K}}
\newcommand{\cL}{{\mathcal L}}
\newcommand{\cM}{{\mathcal M}}
\newcommand{\cO}{{\mathcal O}}
\newcommand{\Cond}{\operatorname{Cond}}
\newcommand{\Gr}{\operatorname{Gr}}
\newcommand{\LCS}{\operatorname{LCS}}
\newcommand{\relint}{\operatorname{relint}}
\newcommand{\Res}{\operatorname{Res}}
\newcommand{\Sing}{\operatorname{Sing}}
\newcommand{\Spec}{\operatorname{Spec}}
\newcommand{\Supp}{\operatorname{Supp}}
\theoremstyle{plain}
\newtheorem{thm}{Theorem}[section]
\newtheorem{lem}[thm]{Lemma}
\newtheorem{cor}[thm]{Corollary}
\newtheorem{prop}[thm]{Proposition}
\theoremstyle{definition}
\newtheorem{defn}[thm]{Definition}
\newtheorem{exmp}[thm]{Example}
\newtheorem{rem}[thm]{Remark}
\newtheorem{ack}{Acknowledgments}   
\theoremstyle{remark}
\begin{document}

\bibliographystyle{amsalpha+}
\title{An injectivity theorem II}
\maketitle

\begin{abstract} 
We extend the injectivity theorem of Esnault and Viehweg to a class of non-normal
log varieties, which contains normal crossings log varieties, and is closed under
the operation of taking the $\LCS$ locus.
\end{abstract} 



\footnotetext[1]{2010 Mathematics Subject Classification. Primary: 14F17 Secondary: 14E30.}

\footnotetext[2]{Keywords: normal crossings singularities, residues, vanishing theorems.}


\section*{Introduction}


The birational classification of complex manifolds rests 
on vanishing theorems for Cartier divisors of the form $L\sim_\Q K_X+B$, where 
$(X,B)$ is a {\em log smooth variety} (i.e. $X$ is a smooth complex variety and $B=\sum_ib_iE_i$
is a boundary with coefficients $b_i\in \Q\cap [0,1]$, such that $\sum_iE_i$ is a
normal crossings divisor on $X$). In the order in which one may prove these vanishing
theorems, they are Esnault-Viehweg injectivity,
Tankeev-Koll\'ar injectivity, Koll\'ar's torsion freeness, Ohsawa-Koll\'ar vanishing, 
Kawamata-Viehweg vanishing. The injectivity theorems imply the rest. 
Modulo cyclic covering tricks and Hironaka's desingularization, 
the injectivity theorems are a direct consequence of the $E_2$-degeneration of the Hodge to de Rham 
spectral sequence associated to an open manifold.

To study the category of log smooth varieties, it is necessary to enlarge it to allow certain 
non-normal, even reducible objects, which appear in inductive arguments in the study of linear systems, 
or in compactification problems for moduli spaces of manifolds. The smallest such enlargement 
is the category of {\em normal crossings log varieties} $(X,B)$, which may be thought as glueings of log
smooth varieties, in the simplest possible way. By definition, they are locally analytically isomorphic
to the local model $0\in X=\cup_{i\in I}\{z_i=0\}\subset \bA^N$, endowed with
the boundary $B=\sum_{j\in J} b_j \{z_j=0 \}|_X$, where $I,J$ are disjoint subsets of $\{1,\ldots,N\}$
and $b_j\in \Q\cap [0,1]$. Since $X$ has Gorenstein singularities, the dualizing sheaf $\omega_X$ 
is an invertible $\cO_X$-module. We denote by $K_X$ a Cartier divisor on $X$ such that $\omega_X\simeq \cO_X(K_X)$.
By definition, $B$ is $\Q$-Cartier. Normal crossings varieties are build up of their lc centers, closed 
irreducible subvarieties, which on the local analytic model correspond to (unions of) affine subspaces 
$\cap_{i\in I'}\{z_i=0\} \cap \cap_{j\in J'}\{z_j=0\}\subset \bA^N$, where $I'\subset I$ is a non-empty
subset, and $J'\subset \{j\in J;b_j=1\}$ is a possibly empty subset. For example, the irreducible
components of $X$ are lc centers of $(X,B)$. Inside the category of normal crossings log varieties,
log smooth varieties are exactly those with normal ambient space. The aim of this paper is to show 
that the above mentioned vanishing theorems remain true in the category of normal crossings varieties.

\begin{thm}\label{MainNC}
Let $(X,B)$ be a normal crossings log variety, $L$ a Cartier divisor on $X$, and $f\colon X\to Y$ 
a proper morphism.
\begin{itemize}
\item[1)] (Esnault-Viehweg injectivity) Suppose $L\sim_\Q K_X+B$. Let $D$ be an effective Cartier 
divisor supported by $B$. Then the natural homomorphisms $R^qf_* \cO_X(L) \to R^qf_* \cO_X(L+D)$ 
are injective.
\item[2)] (Tankeev-Koll\'ar injectivity) Suppose $L\sim_\Q K_X+B+H$, where $H$
is a semiample $\Q$-divisor. Let $D$ be an effective Cartier divisor which contains 
no lc center of $(X,B)$, and such that $D\sim_\Q u H$ for some $u>0$. 
Then the natural homomorphisms $R^qf_*\cO_X(L)\to R^qf_*\cO_X(L+D)$ are injective.
\item[3)] (Koll\'ar's torsion freeness) Suppose $L\sim_\Q K_X+B$. Let $s$ be a local section of $R^qf_*\cO_X(L)$ 
whose support does not contain $f(C)$, for every lc center $C$ of $(X,B)$. Then $s=0$.
\item[4)] (Ohsawa-Koll\'ar vanishing) Let $g\colon Y \to Z$ be a projective morphism. Suppose $L\sim_\Q K_X+B+f^*A$, 
where $A$ is a $g$-ample $\Q$-Cartier divisor on $Y$. Then $R^pg_*R^qf_*\cO_X(L)=0$ for $p\ne 0$.
\end{itemize}
\end{thm}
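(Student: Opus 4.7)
The plan is to prove the injectivity statement (1) first, and deduce (2), (3), (4) from it by the standard Kollár-type arguments adapted to the normal crossings setting. For (1), I would argue by induction on $\dim X$, the base case being the log smooth case, which is the classical theorem of Esnault-Viehweg. One first reduces to $Y$ a point by standard localization on $Y$ and the Leray spectral sequence.

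For the inductive step in (1), I would exploit the normalization $\nu\colon \tilde X \to X$. Since $X$ is locally a union of coordinate subspaces $\bigcup_{i\in I}\{z_i=0\}$, the variety $\tilde X$ is smooth and carries a boundary $\tilde B = \nu^{*}B + C$, where $C$ is the reduced conductor divisor; adjunction for the Gorenstein variety $X$ yields $\nu^{*}(K_X+B)\sim_\Q K_{\tilde X}+\tilde B$, so that $(\tilde X,\tilde B)$ is log smooth. The key tool is a Poincar\'e-residue short exact sequence of the shape
$$
0 \to \cO_X(L) \to \nu_{*}\cO_{\tilde X}(\nu^{*}L) \to \cO_S(L|_S) \to 0,
$$
where $S$ is the $\LCS$ locus of $(X,B)$, i.e.\ the union of the singular locus of $X$ with the reduced part of $B$; indeed, tensoring the conductor sequence $0\to \cO_X\to \nu_{*}\cO_{\tilde X}\to \cO_S^{\prime}\to 0$ by the Cartier divisor $L$ gives the desired shape. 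Since the class of normal crossings log varieties is closed under the $\LCS$ operation (the central structural property exploited in this paper), the pair $(S,B_S)$ inherits a normal crossings log structure with $K_S+B_S=(K_X+B)|_S$, so $L|_S \sim_\Q K_S+B_S$ falls under the inductive hypothesis. Applying $R^qf_{*}$ to the comparison between this residue sequence for $L$ and the analogous sequence for $L+D$, and chasing the snake lemma, reduces injectivity on $X$ to log smooth Esnault-Viehweg on $\tilde X$ combined with the inductive injectivity on $S$.

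The main obstacle will be the clean construction of this residue sequence and verifying its naturality with respect to the inclusion $\cO_X(L)\hookrightarrow \cO_X(L+D)$. Concretely, this requires a local analysis on the model $X=\bigcup_{i\in I}\{z_i=0\}$ of the Poincar\'e residues along each component of $B$ and along each stratum of the singular locus of $X$, together with an identification of $\omega_X|_S$ with the dualizing sheaf of $S$ twisted by the appropriate boundary, so that the residue map $\nu_{*}\cO_{\tilde X}(\nu^{*}L)\to \cO_S(L|_S)$ is well defined and commutes with multiplication by a local equation of $D$. Once this naturality is in place, the induction closes cleanly because $\dim S<\dim X$.

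To deduce (2), I would run the usual cyclic-cover trick: for $m\gg 0$ pick a general $\Delta\in |mH|$ disjoint from every lc center, so that the index-$m$ cyclic cover of $X$ branched along $\Delta+D$ is still a normal crossings log variety to which (1) applies, and then push forward along the cover. Statement (3) follows from (1) and (2) by Koll\'ar's original argument: after localizing on $Y$, produce an effective divisor $D$ on $X$ that dominates the support of $s$ but contains no lc center of $(X,B)$, take $D$ proportional to an auxiliary semiample class, and apply (2) to force $s=0$. Finally, (4) follows from (3) by the classical reduction using Serre vanishing for $R^pg_{*}$ applied after twisting by a sufficiently large multiple of $f^{*}A$, coupled with the injectivity supplied by applying (2) to $D\sim_\Q mf^{*}A$.
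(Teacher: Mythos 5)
Your architecture for deducing (3) and (4) from the injectivity statements matches the paper's, but the core of your argument --- the inductive proof of (1) via the normalization and the $\LCS$ locus --- has two genuine gaps. First, the exact sequence you write down is not correct: for the conductor square (which the paper shows is both cartesian and a push-out), the cokernel of $\cO_X \to \nu_*\cO_{\tilde X}$ is $\nu_*\cO_{\tilde C}/\cO_S$ where $\tilde C=\nu^{-1}(S)$, not $\cO_S$; so the third term of your sequence is not $\cO_S(L|_S)$ and its cohomology is not what the inductive hypothesis on $(S,B_S)$ controls. (There is also the smaller issue that when $B^{=1}\ne 0$ the divisor $D$ may contain components of $S=\LCS(X,B)$, so $D|_S$ is not an effective Cartier divisor; the paper sidesteps this by reformulating injectivity as injectivity of $H^i(X,\cL)\to H^i(U,\cL|_U)$ for $U=X\setminus\Supp B$.) Second, and more seriously, even granting two compatible short exact sequences for $L$ and $L+D$, the diagram chase does not close: injectivity of $H^q(B)\to H^q(B')$ and $H^{q-1}(C)\to H^{q-1}(C')$ does not imply injectivity of $H^q(A)\to H^q(A')$ in the middle --- a class $a=\partial(c)$ killed in $H^q(A')$ only tells you that $c'$ lifts to $H^{q-1}(B')$, and you cannot conclude that $c$ lifts to $H^{q-1}(B)$ without an additional surjectivity or strictness statement. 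This is precisely the difficulty the author flags at the end of Section 4: the normalization/$\LCS$ push-out diagram suffices for torsion freeness and vanishing, but the author states explicitly that they were \emph{unable} to reduce the injectivity theorems this way. The paper instead uses the full simplicial resolution $X_n=(\bar X/X)^{n+1}$ with its descended log smooth structures $(X_n,B_n)$ (built from the residue theory of the companion paper), and runs a spectral sequence argument in which the relevant local cohomology maps are \emph{zero} on each simplicial piece and the local cohomology sequence splits at the level of Godement resolutions; that is the extra input your snake lemma lacks.

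A further gap is your deduction of (2) from (1) by a cyclic cover branched along $\Delta+D$: on a non-normal $X$ you cannot arrange $D$ to meet $B$ transversally, because resolving to achieve transversality leaves the normal crossings category. The paper's route is to prove (2) by the same simplicial descent as (1), invoking on each smooth piece $X_p$ a log smooth Tankeev--Koll\'ar lemma whose proof does use a resolution $\mu\colon X'\to X_p$ (legitimate there, since $X_p$ is smooth) together with the perturbation $L\sim_\Q K+B+\epsilon D+\frac{1-\epsilon u}{n}S$. Your deductions of (3) from (2) and of (4) from (2), (3), Serre vanishing and adjunction to a general member of $|f^*\cA^{\otimes m}|$ (together with induction on dimension, which you should state explicitly, since it requires the GNC/NC category to be closed under this adjunction) do agree with the paper.
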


The notation $L\sim_\Q M$ means that there exists a positive integer $r$ such that both $rL$ and $rM$ are Cartier 
divisors, and $\cO_X(rL)\simeq \cO_X(rM)$. Kawamata-Viehweg vanishing is the case $\dim Z=0$ of the Ohsawa-Koll\'ar
vanishing.

Theorem~\ref{MainNC}.2)-4) was proved by Kawamata~\cite{Kaw85} if $B$ has coefficients 
strictly less than $1$, and it was proved for {\em embedded} normal crossings varieties $(X,B)$ in~\cite[Section 3]{Amb03}.
We remove the global embedded assumption in this paper, as expected in~\cite[Remark 2.9]{Amb03}.
Theorem~\ref{MainNC}.1) is implicit in the proof of~\cite[Theorem 3.1]{Amb03}, in the case when $(X,B)$ is 
embedded normal crossings and $D$ is supported by the part of $B$ with coefficients strictly less than $1$, 
which is the original setting of Esnault and Viehweg. We observed in~\cite{Amb14} that the same results holds if $D$ is 
supported by $B$, and Theorem~\ref{MainNC}.1) extends~\cite{Amb14} to the normal crossings case.
 
Theorem~\ref{MainNC} is proved by reduction to the log smooth case. There are two known methods of proof.
Let $\bar{X}\to X$ be the normalization, let $X_n=(\bar{X}/X)^{n+1}$ for $n\ge 0$. With the natural projections
and diagonals, we obtain a simplicial algebraic variety $X_\bullet$, together with a natural augmentation 
$\epsilon \colon X_\bullet\to X$. The key point is that each $X_n$ is smooth,
so we may really think of $\epsilon$ as a resolution of singularities. The method in~\cite{Kaw85} is to
use the descent spectral sequence to deduce a statement on $X$ from the same statement on each $X_n$.
The method in~\cite{Amb03} is to lift the statement from $X$ to a statement on $X_\bullet$, and imitate the 
proof used in the log smooth case in this simplicial setting. In this paper we use the method in~\cite{Kaw85}.
The new idea is an adjunction formula
$$
(K_X+B)|_{X_n}\sim_\Q K_{X_n}+B_n,
$$ 
for a suitable log smooth structure $(X_n,B_n)$, for each $n$. Moreover, $(X_n,B_n)$ glue to a log smooth simplicial 
variety. To achieve this, we observe that each irreducible component of $X_n$ is the normalization of some lc center 
of $(X,B)$. Then the adjunction formula follows from the theory of residues for normal crossings varieties developed 
in~\cite{TFR2}. To construct residues for normal crossings varieties we have to deal with slightly more general 
singularities, namely{\em generalized normal crossings log varieties}. The motivation for this enlargement,
is that if $X$ has normal crossings singularities, then $\Sing X$ may not have normal crossings singularities. But
if $X$ has generalized normal crossings singularities, so does $\Sing X$. We actually prove Theorem~\ref{MainNC}
in the category of generalized normal crossings singularities (Theorems~\ref{EVGNC},~\ref{TKGNC},~\ref{TFGNC},~\ref{OKGNC}). 
The same proof works in the category of normal crossings log varieties, provided their residues to lc centers are
taken for granted. Note that generalized normal crossings
singularities in our sense are more general than those defined by Kawamata~\cite{Kaw85}.
For example, every seminormal curve is generalized normal crossings.

To illustrate how generalized normal crossings appear, let us consider two examples of residues.
First, consider the log smooth variety $(\bA^2,H_1+H_2)$, where $H_1,H_2$ are the standard 
hyperplanes, intersecting at the origin $0$. We want to perform adjunction from $(\bA^2,H_1+H_2)$
to its lc center $0$. We may first take residue onto $H_1$, and end up with the log structure $(H_1,0)$,
and then take residue from $(H_1,0)$ to $0$. But we may also restrict to $(H_2,0)$, and then to $0$.
The two chains of residues do not coincide; they differ by $-1$. Since an analytic isomorphism interchanges
the two hyperplanes, none of the above compositions of residues is canonical. But they become canonical if
raised to even powers. We obtain a canonical residue isomorphism
$$
\Res^{[2]}_{\bA^2\to 0}\colon \omega_{\bA^2}(\log H_1+H_2)^{\otimes 2}|_0 \isoto \omega_0^{\otimes 2}
$$
Now we construct the same residue isomorphism, without coordinates.
Denote $C=H_1+H_2$. Let $\omega_C$ be the sheaf whose sections are rational differential forms
which are regular outside $0$, and on the normalization $H_1\sqcup H_2$ of $C$ induce forms with
logarithmic poles along the two points $O_1,O_2$ above the origin, and have the same residues at $O_1,O_2$.
One checks that $\omega_C$ is an invertible $\cO_C$-module. The residues from $\bA^2$ to the irreducible components 
of normalization of $C$ glue to a residue isomorphism 
$$
\Res^{[2]}_{\bA^2\to C}\colon \omega_{\bA^2}(\log C)^{\otimes 2}|_C \isoto \omega_C^{\otimes 2}.
$$
Since the forms of $\omega_C$ have the same residues above the origin, we also obtain a residue isomorphism
$$
\Res^{[2]}_{C\to 0}\colon \omega_C^{\otimes 2}|_0 \isoto \omega_0^{\otimes 2}.
$$
The composition $\Res^{[2]}_{C\to 0}\circ \Res^{[2]}_{\bA^2\to C}$ is exactly $\Res^{[2]}_{\bA^2\to 0}$. It is intrinsic,
independent of the choice of coordinates, or analytic isomorphisms. Note that $\omega_C$ differs from the Rosenlicht dualizing sheaf 
$\Omega_C$, but $\omega_C^{\otimes m}=\Omega^{\otimes m}_C$ for $m\in 2\Z$ (at the origin, the local generator 
for $\omega_C$ is $(\frac{dz_1}{z_1},\frac{dz_2}{z_2})$, and for $\Omega_C$ is $(\frac{dz_1}{z_1},-\frac{dz_2}{z_2})$).

Second, let $S$ be the normal crossings surface $(xyz=0)\subset \bA^3$, set $B=0$. We want to perform adjunction from
$S$ to its lc center the origin. As above, we may first restrict to a plane, then to a line, and then to the origin. There are several
choices of chains, which coincide up to a sign. If we raise to an even power, we obtain residue isomorphisms from $S$ to $0$. These
are invariant under analytic isomorphisms, since we can also define them in the following invariant way. Let $C=\Sing S$.
Then $C$ is the union of coordinate axis in $\bA^3$, a seminormal curve which is not Gorenstein. The usual dualizing sheaf
is useless in this situation. We may define $\omega_C$ as above (requiring same residues over the origin), and then
$\omega_C$ is an invertible $\cO_C$-module (at the origin, the local generator is $(\frac{dz_1}{z_1},\frac{dz_2}{z_2},\frac{dz_3}{z_3})$),
and residues from $S$ to the irreducible components of the normalization of $C$ glue to a residue isomorphism 
$$
\Res^{[2]}_{S \to C}\colon \omega_S^{\otimes 2}|_C \isoto \omega_C^{\otimes 2}.
$$
The singular locus of $C$ is $0$, and we again obtain a residue isomorphism 
$$
\Res^{[2]}_{C\to 0}\colon \omega_C^{\otimes 2}|_0 \isoto \omega_0^{\otimes 2}.
$$
The composition $\Res^{[2]}_{C\to 0}\circ \Res^{[2]}_{S\to C}$ is exactly $\Res^{[2]}_{S\to 0}$, defined from coordinates.

The conclusion we draw from these two examples is that we must redefine the powers of log canonical sheaf 
$\omega^{[n]}_{(X,B)}\ (n\in \Z)$ (without dualizing property),
and we must allow singularities which are not normal crossings, but very close. In~\cite{TFR2}, we constructed residues for
so called n-wlc varieties. Generalized normal crossings varieties are a special case of n-wlc varieties.

We outline the structure of this paper. In Section 1, we construct the simplicial log variety induced by a n-wlc log variety.
The reader should be familiar with~\cite[Sections 3 and 5]{TFR2}. In Section 2, we define generalized normal crossings log varieties, and 
analyze the induced simplicial log variety. In Section 3, we prove the vanishing theorems. The injectivity theorems are 
reduced to the smooth case, using the simplicial log structure induced. The torsion freeness and vanishing theorems are
deduced then by standard arguments. In Section 4, we collect some inductive properties of generalized normal crossings
varieties. The key inductive property is that the $\LCS$-locus of a generalized normal crossings log variety is again a 
generalized normal crossings log variety, for a suitable boundary, and we can perform adjunction onto the $\LCS$-locus.
We hope that in the future one may be able to use these inductive properties to reprove the vanishing theorems in Section 3.

\begin{ack} 
This work was mostly done during a visit to the IBS Center for Geometry and Physics in Pohang, Korea. 
I am grateful to Jihun Park for hospitality.
\end{ack}


\section{Preliminary}


All varieties are defined over an algebraically closed field $k$, of characteristic zero.

A {\em log smooth variety} is a pair $(X,B)$, where $X$ is a smooth $k$-variety and $B=\sum_ib_iE_i$
is a boundary such that $b_i\in \Q\cap [0,1]$ and $\sum_iE_i$ is a NC divisor.

We refer the reader to~\cite{TFR2} for the definition and basic properties of wlc varieties $(X/k,B)$, and
some special cases:  toric and n-wlc. We will remove the fixed ground field $k$ from notation; for example we 
denote $\omega^{[n]}_{(X/k,B)}$ by $\omega^{[n]}_{(X,B)}$.

\begin{lem}\label{ilc}
Let $(X',B_{X'})$ and $(X,B)$ be normal log pairs, let $f\colon (X',B_{X'})\to (X,B)$ be \'etale and
log crepant. Let $Z' \subset X'$ be a closed irreducible subset. Then $Z'$ is an lc center of $(X',B_{X'})$ if
and only if $f(Z')$ is an lc center of $(X,B_X)$. 
\end{lem}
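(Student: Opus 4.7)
The plan is to reduce everything to a log resolution, where the statement becomes a matter of tracking irreducible components of preimages of divisors under an étale morphism. First I would pick a log resolution $\mu\colon Y\to X$ of $(X,B)$ and form the base change $Y'=Y\times_X X'$, with projections $\mu'\colon Y'\to X'$ and $g\colon Y'\to Y$. Since $f$ is étale, so is $g$; thus $Y'$ is smooth, $g^{-1}$ of the NC divisor on $Y$ is again NC, and $\mu'$ is an isomorphism over the $f$-preimage of the locus where $\mu$ is an isomorphism. Hence $\mu'$ is a log resolution of $(X',B_{X'})$.

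Next I would transport discrepancies from $Y$ to $Y'$. Write $\mu^*(K_X+B)=K_Y+B_Y$ with $B_Y=\sum_i b_i E_i$. Since $g$ is étale we have $g^*K_Y=K_{Y'}$, and log crepancy gives $K_{X'}+B_{X'}=f^*(K_X+B)$. Combined with the identity $f\circ\mu'=\mu\circ g$, pulling back yields $\mu'^*(K_{X'}+B_{X'})=K_{Y'}+g^*B_Y$. Therefore the coefficient of the induced boundary on $Y'$ along every irreducible component of $g^{-1}(E_i)$ equals $b_i$, and in particular the lc places of $(X',B_{X'})$ on $Y'$ are precisely the irreducible components of $g^{-1}(E)$ as $E$ runs over the lc places of $(X,B)$ on $Y$.

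Finally, I would use the standard characterization of lc centers as irreducible components of images under the resolution of non-empty intersections of lc places. Given an intersection $W=E_{i_1}\cap\cdots\cap E_{i_r}$ of lc places on $Y$, each irreducible component $W'$ of $g^{-1}(W)$ is an intersection of the corresponding lc places on $Y'$, and every such intersection on $Y'$ arises in this way. Since $f$ and $g$ are étale, hence flat and open, the relation $f\circ\mu'=\mu\circ g$ together with a componentwise analysis gives a bijection between irreducible components of $\mu(W)$ and irreducible components of $\mu'(W')$ (modulo taking closures), matching $Z'\mapsto f(Z')$. Reading off this bijection yields the claimed equivalence.

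The main obstacle will be the bookkeeping around images under the étale map $f$: because $f$ is open but not closed, the set $f(Z')$ need not be closed in $X$, so the statement has to be interpreted by passing to closures, and one must check that the components of $\mu'(W')$ correspond under $f$ to components of $\mu(g(W'))$. Once this matching is nailed down and the identification of lc places on $Y$ versus $Y'$ is in hand, the equivalence is a formal consequence of the étale-invariance of the discrepancy data.
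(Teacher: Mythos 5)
Your argument is correct in outline, but it takes a genuinely different route from the paper. The paper's proof is a three-line reduction: cut $f(Z')$ with general hyperplane sections to reduce to the case where $Z'$ is a closed point $P'$, shrink $X$ to $f(X')$ and $X'$ so that $f^{-1}f(P')=P'$, and then quote \cite[page 46, 2.14.(2)]{Kbook}. You instead base-change a log resolution $\mu\colon Y\to X$ along the \'etale map and track lc places directly; this is self-contained (no citation needed beyond the standard description of lc centers of an lc pair as the $\mu$-images of the irreducible components of intersections of lc places on a single log resolution) at the cost of more bookkeeping. The discrepancy transport is exactly right: $\mu'^*(K_{X'}+B_{X'})=g^*\mu^*(K_X+B)=K_{Y'}+g^*B_Y$, and since the components of the \'etale preimage of each smooth $E_i$ are disjoint and carry the same coefficient, the lc places upstairs are precisely the components of $g^{-1}$ of the lc places downstairs, and components of intersections of lc places on $Y'$ are exactly the components of $g^{-1}(W)$ for $W$ an intersection downstairs. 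Two points deserve sharper treatment than your sketch gives them. First, the matching of centers is not a bijection (an lc center of $(X,B)$ generally has several lc centers of $(X',B_{X'})$ over it), and the clean statement you want is not about components of $\mu(W)$ versus $\mu'(W')$ but rather: for $V$ a component of $W$, the fiber-product description gives $\mu'(g^{-1}(V))=f^{-1}(\mu(V))$, so the irreducible components of $f^{-1}(\mu(V))$ (which all have dimension $\dim\mu(V)$ since $f$ is \'etale, hence any closed irreducible $Z'$ with $\overline{f(Z')}=\mu(V)$ is such a component) are exactly the maximal sets among the $\overline{\mu'(W')}$ for $W'$ a component of $g^{-1}(V)$; this is what makes the ``only if'' direction work. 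Second, you are right that $f(Z')$ need not be closed and the statement must be read with closures; the paper sidesteps this entirely by its reduction to closed points, whereas in your approach it is handled by the identity $\overline{f(\mu'(W'))}=\mu(\overline{g(W')})$ with $\overline{g(W')}$ a whole component of $W$ (an open dense irreducible subset of $W$ lies in, and is dense in, a single component). With these two points nailed down, your proof goes through.
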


\begin{proof}
Cutting $f(Z')$ with general hyperplane sections, we may suppose $Z'$ is a closed point $P'$.
Since $f$ is open, we may replace $X$ by the image of $f$ and suppose $f$ is surjective. 
After removing from $X'$ the finite set $f^{-1}f(P')\setminus P'$, we may also suppose $f^{-1}f(P')=P'$.
Then the claim follows from~\cite[page 46, 2.14.(2)]{Kbook}.
\end{proof}


\subsection{Simplicial log structure induced by a n-wlc log variety}


Let $(X,B)$ be a n-wlc log variety (see~\cite[Section 5]{TFR2}). Let $r\in (2\Z)_{>0}$ such that $rB$ has integer
coefficients and $\omega^{[r]}_{(X,B)}$ is an invertible $\cO_X$-module.
Let $\pi\colon \bar{X}\to X$ be the normalization.
Then $X_n=(\bar{X}/X)^{n+1}\ (n\ge 0)$ are the components of a simplicial $k$-algebraic
variety $X_\bullet$, endowed with a natural augmentation $\epsilon\colon X_\bullet\to X$.

\begin{prop}\label{slv} The following properties hold:
\begin{itemize}
\item[a)] Each $X_n$ is normal. Let $Z_n$ be an irreducible component of $X_n$. Then $\epsilon_n
\colon Z_n\to X$ is the normalization of an lc center of $(X,B)$. Let $(Z_n,B_{Z_n})$
be the n-wlc log variety structure induced by the residue isomorphism
$
\Res^{[r]}\colon \omega^{[r]}_{(X,B)}|_{Z_n}\isoto \omega^{[r]}_{(Z_n,B_{Z_n})}
$
(see~\cite[Theorem 5.9]{TFR2}).

Let 
$
(X_n,B_n)=\sqcup_{Z_n}(Z_n,B_{Z_n})
$ 
be the induced structure of normal log variety, with n-wlc singularities (independent of the choice of $r$). 
We obtain isomorphisms
$$
\Res^{[r]}_{X\to X_n}\colon \epsilon_n^*\omega^{[r]}_{(X,B)} \isoto \omega^{[r]}_{(X_n,B_n)}.
$$
Moreover, each lc center of $(X,B)$ is the image of some lc center of $(X_n,B_n)$.

\item[b)] Let $\varphi\colon X_m\to X_n$ be the simplicial morphism induced by an
order preserving morphism $\Delta_n\to \Delta_m$, for some $m,n\ge 0$. It induces a commutative diagram
\[ 
\xymatrix{
   X_m  \ar[dr]_{\epsilon_m} \ar[rr]^\varphi     &        &  X_n    \ar[dl]^{\epsilon_n}  \\
     &      X       &  
} \]
Let $Z_m$ be an irreducible component of $X_m$. Then $\varphi \colon Z_m\to X_n$
is the normalization of an lc center of $(X_n,B_n)$. Let 
$
\Res^{[r]}\colon \omega^{[r]}_{(X_n,B_n)}|_{Z_m}\isoto \omega^{[r]}_{(Z_m,B_{Z_m})}
$
be the induced residue isomorphism. Let $\Res^{[r]}_\varphi\colon \varphi^* \omega^{[r]}_{(X_n,B_n)}
\isoto \omega^{[r]}_{(X_m,B_m)}$ be the induced isomorphism. Then 
$$
\Res^{[r]}_\varphi\circ \ \varphi^*\Res^{[r]}_{X\to X_n}=\Res^{[r]}_{X\to X_m}.
$$
In particular, $\omega^{[r]}_{(X_n,B_n)}$ and $\Res^{[r]}_\varphi$
form an $\cO_{X_\bullet}$-module $\omega^{[r]}_{(X_\bullet,B_\bullet)}$, 
endowed with an isomorphism
$
\epsilon^*\omega^{[r]}_{(X,B)} \isoto \omega^{[r]}_{(X_\bullet,B_\bullet)}.
$
\end{itemize}
\end{prop}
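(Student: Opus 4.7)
The plan is to work étale-locally on $X$, using Lemma~\ref{ilc} to transport the identification of lc centers along étale log crepant morphisms, and then to assemble the statement globally. Throughout I rely on the local description of n-wlc log varieties from \cite[Section~5]{TFR2} and on the residue isomorphism of \cite[Theorem~5.9]{TFR2}.

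First I would identify the irreducible components of $X_n$. Étale-locally, $\pi\colon\bar X\to X$ is the disjoint union of the irreducible branches of $X$; hence $X_n=(\bar X/X)^{n+1}$ decomposes, étale-locally, as a disjoint union indexed by ordered $(n+1)$-tuples of branches, each component being the fibered intersection $\bar X_{i_0}\times_X\cdots\times_X\bar X_{i_n}$. By the local model for n-wlc singularities in \cite[Section~5]{TFR2}, each such fibered intersection is normal; in particular each $X_n$ is normal. The set-theoretic image in $X$ of an irreducible component $Z_n$ is an lc center $C$ of $(X,B)$, and the morphism $\epsilon_n\colon Z_n\to C$ is, étale-locally, birational with normal source, hence it is the normalization of $C$.

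With the components of $X_n$ identified as normalizations of lc centers of $(X,B)$, the residue isomorphism \cite[Theorem~5.9]{TFR2} applies to each $Z_n\to X$ and produces a canonical n-wlc structure $(Z_n,B_{Z_n})$ together with an isomorphism $\omega^{[r]}_{(X,B)}|_{Z_n}\isoto \omega^{[r]}_{(Z_n,B_{Z_n})}$. Taking the disjoint union over components yields $(X_n,B_n)$ and the isomorphism $\Res^{[r]}_{X\to X_n}$ of (a). Independence of $(X_n,B_n)$ from the choice of $r$ follows because the residues for $r$ and $r'$ agree on the $rr'$-th power, again by \cite[Theorem~5.9]{TFR2}. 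Surjectivity of the center-to-component correspondence gives the last clause of (a).

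For part (b), a simplicial morphism $\varphi\colon X_m\to X_n$ arising from an order preserving map $\Delta_n\to\Delta_m$ sends each irreducible component $Z_m$ of $X_m$ to a unique irreducible component $Z_n$ of $X_n$; étale-locally the induced map $Z_m\to Z_n$ is again the inclusion of a fibered intersection of branches of the normalization of $X_n$, so the argument of (a) applied to the n-wlc variety $(X_n,B_n)$ shows that $\varphi\colon Z_m\to X_n$ is the normalization of an lc center of $(X_n,B_n)$. The residue theorem of \cite[Theorem~5.9]{TFR2} now furnishes $\Res^{[r]}_\varphi$, and its transitivity along the tower $X\to X_n\to X_m$ gives the identity
$$
\Res^{[r]}_\varphi\circ\varphi^*\Res^{[r]}_{X\to X_n}=\Res^{[r]}_{X\to X_m}.
$$
This transitivity amounts to the simplicial cocycle condition for the data $(\omega^{[r]}_{(X_n,B_n)},\Res^{[r]}_\varphi)$, so they assemble into an $\cO_{X_\bullet}$-module $\omega^{[r]}_{(X_\bullet,B_\bullet)}$ together with the asserted augmentation isomorphism.

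The main obstacle I expect is the étale-local combinatorial identification at the beginning: one must verify that the irreducible components of the fibered powers $(\bar X/X)^{n+1}$ are precisely the normalizations of the lc centers of $(X,B)$, for arbitrary n-wlc $(X,B)$ rather than only for normal crossings. Once this bijection is secured via the local models of \cite[Section~5]{TFR2} and descent via Lemma~\ref{ilc}, the rest of the proposition reduces to a clean application of the residue theorem and its transitivity.
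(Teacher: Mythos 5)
Your overall strategy is the paper's: reduce to a (toric) local model via Artin approximation, identify the irreducible components of $X_n=(\bar X/X)^{n+1}$ as the subvarieties $X_{F_0\cap\cdots\cap F_n}$ indexed by $(n+1)$-tuples of facets, observe these are normal lc centers carrying the residue-induced structure, and check the compatibility in (b) on face and degeneracy morphisms. Up to that point the proposal matches the intended argument.

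There is, however, a genuine gap in how you handle the last clause of (a), and it resurfaces in your closing paragraph. You justify ``each lc center of $(X,B)$ is the image of some lc center of $(X_n,B_n)$'' by ``surjectivity of the center-to-component correspondence,'' and you state that the obstacle to be overcome is verifying that the components of $(\bar X/X)^{n+1}$ are \emph{precisely} the normalizations of the lc centers of $(X,B)$. That bijection is false: for fixed $n$ the components of $X_n$ are only the $X_{F_0\cap\cdots\cap F_n}$, i.e.\ the lc centers arising as intersections of irreducible components of $X$, whereas $(X,B)$ generally has deeper lc centers (any $X_\gamma$ with $\psi\in\gamma$ where $\gamma$ is a face that is not an intersection of facets, and, when $B^{=1}\ne 0$, centers cut out by the boundary inside a single component). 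If you try to prove the bijection you propose, you will fail. The correct argument, which the paper gives, is that such a center $X_\gamma$ is an lc center of the \emph{induced log structure} $(X_{F\cap\cdots\cap F},B_n)$ on the diagonal component of $X_n$ lying over a facet $F\supseteq\gamma$ --- so it is the image of an lc center of $(X_n,B_n)$ without being the image of a component. Relatedly, in (b) you invoke ``transitivity'' of the residue isomorphisms as if it were part of \cite[Theorem 5.9]{TFR2}; the paper actually verifies the identity $\Res^{[r]}_\varphi\circ\varphi^*\Res^{[r]}_{X\to X_n}=\Res^{[r]}_{X\to X_m}$ in the toric local model by checking that all the log structures involved share the same log discrepancy function $\psi$ (this matters because individual residues are only canonical up to sign, whence $r\in(2\Z)_{>0}$). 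You should replace the appeal to transitivity by this local-model computation, or at least reduce to face and degeneracy morphisms and verify it there.
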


\begin{proof}
By~\cite{Ar69}, we may suppose $(X,B)$ coincides with a local model.
That is $X=\Spec k[\cM]$ is the toric variety associated with a monoidal
complex $\cM=(M,\Delta,(S_\sigma)_{\sigma\in \Delta})$, $X$ has normal
irreducible components, $B$ is an effective boundary supported by invariant
prime divisors at which $X$ is smooth, and $(X,B)$ has wlc singularities.

Let $X=\cup_F X_F$ be the decomposition into irreducible components,
where the union runs after all facets $F$ of $\Delta$. Let $\psi\in \cap_F \frac{1}{r}S_F$ be
the log discrepancy function of $(X,B)$. By assumption, each irreducible component
$X_F$ is normal. Therefore $\bar{X}=\sqcup_F X_F$. We obtain 
$$
X_n=\sqcup_{F_0,\ldots,F_n}X_{F_0\cap \cdots \cap F_n}.
$$
Since $\psi\in F_0\cap \cdots \cap F_n$, each $X_{F_0\cap \cdots \cap F_n}$
is an lc center of $(X,B)$. The toric log structure induced via residues on 
$X_{F_0\cap \cdots \cap F_n}$ is that induced by the log discrepancy function 
$\psi\in F_0\cap \cdots \cap F_n$. 
 
An lc center of $(X,B)$ is of the form $X_\gamma$, with $\psi\in \gamma\in \Delta$.
If $F$ is a facet of $\Delta$ which contains $\gamma$, then $X_\gamma$ is also 
an lc center of the irreducible component $(X_{F\cap \cdots \cap F},B_n)$ of 
$(X_n,B_n)$. This proves a).

For b), recall that any simplicial morphism is a composition of face morphisms
$\delta_i\colon X_{n+1}\to X_n$ and degeneracy morphisms $s_i\colon X_n\to X_{n+1}$.
Hence suffices to verify b) for face and degeneracy morphisms. For our local model,
$\delta_i$ embeds $X_{F_0\cap \cdots \cap F_{n+1}}$ into $X_{F_0\cap \cdots \widehat{F_i} \cdots\cap F_{n+1}}$,
and $s_i$ maps $X_{F_0\cap \cdots \cap F_n}$ isomorphically onto 
$X_{F_0\cap \cdots \cap F_i\cap F_i\cap \cdots \cap F_n}$. Then b) holds in our case,
since all log structures involved have the same log discrepancy function $\psi$.
\end{proof}


\section{GNC log varieties}


Recall first some standard notation. The set $\{1,2,\ldots,N\}$ is denoted by $[N]$,
the $k$-affine space $\bA^N_k$ has coordinates $(z_i)_{i\in [N]}$, and $H_i=\{z\in \bA^N;z_i=0\}$
is the standard $i$-th hyperplane. For a subset $F\subseteq [N]$, denote 
$\bA_F=\cap_{i\in [N]\setminus F} \{z\in \bA^N; z_i=0\}$. It is an affine space with coordinates
$(z_i)_{i\in F}$.

\begin{defn}
A {\em GNC (generalized normal crossings) local model} is a pair $(X,B)$, of the following form:
\begin{itemize}
\item[a)] $X=\cup_F \bA_F\subset \bA_k^N$, where the union is indexed after finitely many subsets 
$F\subseteq [N]$ (called facets), not contained in one another. We assume 
$X$ satisfies Serre's property $S_2$, that is for any two facets $F\ne F'$, there exists a chain of facets $F=F_0,F_1,\ldots,F_l=F'$
such that for every $0\le i<l$, $F_i\cap F_{i+1}$ contains $F\cap F'$ and it has codimension one in both
$F_i$ and $F_{i+1}$. 
\item[b)] Denote $\sigma=\cap_F F$. If $\sigma\prec \tau\prec F$ and $\tau$ has codimension one in $F$,
then there exists a facet $F'$ such that $\tau=F\cap F'$.
\item[c)] $B=(\sum_{i\in \sigma}b_iH_i)|_X$, where $b_i \in \Q\cap [0,1]$ and $H_i=\{z\in \bA^N; z_i=0\}$.
We may rewrite $B=\sum_F\sum_{i\in \sigma}b_i  \bA_{F\setminus i}$.
\end{itemize}
\end{defn}

We claim that $(X,B)$ is a toric wlc log variety. 
Note first that $X$ is the toric variety $\Spec k[\cM]$ associated to the monoidal complex
$\cM=(M,\Delta,(S_\sigma)_{\sigma\in \Delta})$, where $M=\Z^N$, $\Delta$ is the fan consisting 
of the cones $\sum_{i\in F} \R_{\ge 0}m_i$ and all their faces, and $S_\sigma=\Z^N\cap \sigma$ for $\sigma\in \Delta$.
Here $m_1,\ldots,m_N$ denotes the standard basis of the semigroup $\N^N$.
Each irreducible component of $X$ is smooth. The normalization of $X$ is 
$\bar{X}=\sqcup_F \bA_F$. Denote $\psi=\sum_{i\in \sigma}(1-b_i)m_i$. 
On $\bA_F$, $\psi$ induces the log structure with boundary
$$
B_{\bA_F}=\sum_{i\in F\setminus \sigma}\bA_{F\setminus i}+\sum_{i\in \sigma}b_i\bA_{F\setminus i}.
$$
Let $\bar{C}\subset \bar{X}$ be the conductor subscheme.
By a), $\bar{C}|_{\bA_F}\le \sum_{i\in F\setminus \sigma}\bA_{F\setminus i}$. Equality holds if and only
if b) holds. Therefore 
$$
B_{\bA_F}=\bar{C}|_{\bA_F}+\sum_{i\in \sigma}b_i\bA_{F\setminus i}=(\bar{C}+\bar{B})|_{\bA_F}.
$$

We conclude that the irreducible components of $(\bar{X},\bar{C}+\bar{B})$ have the same log discrepancy
function $\psi$, and therefore $(X,B)$ is a toric wlc log variety, by~\cite[Proposition 4.10]{TFR2}. Note that $X$
is $\Q$-orientable by~\cite[Lemma 4.7 and Example 4.8.(2)]{TFR2}.
If $2\mid r$ and $rb_i\in \Z$ for all $i\in \sigma$, then 
$\omega^{[r]}_{(X,B)}\simeq \cO_X$. Given a), properties b) and c) are equivalent to 
\begin{itemize}
\item[b')] $(X,0)$ is a toric wlc log variety.
\item[c')] $B$ is a torus-invariant boundary whose support contains no lc center of $(X,0)$.
\end{itemize}

The $\Q$-divisors $B$, $B^{=1}$, $B^{<1}$ are $\Q$-Cartier (so is the part of $B$ with coefficients 
in a given interval in $\R$).

\begin{exmp} A {\em NC (normal crossings) local model} is a pair $(X,B)$, where 
$X=\cup_{i\in I}H_i\subset \bA_k^N$ and $B=(\sum_{i\notin I}b_iH_i)|_X$,
where $I$ is a non-empty subset of $[N]$ and $b_i\in \Q\cap [0,1]$.
If we set $F=[N]\setminus i\ (i\in I)$, we see that $(X,B)$ is a GNC local model. 
Here we have $\sigma=[N]\setminus I$. 
\end{exmp}

\begin{exmp}
Let $\sigma\subsetneq [N]$, let $|\sigma|\le p<N$.
Let $\{F\}$ consist of all subsets of $[N]$ which have cardinality $p$, and contain $\sigma$. Let $b_i\in \Q \cap [0,1]$,
for $i\in \sigma$. Then $(X=\cup_F\bA_F\subset \bA_k^N,(\sum_{i\in \sigma}b_iH_i)|_X)$ is a GNC local model.
\end{exmp}

\begin{exmp} 
Let $X=\bA_{12}\cup\bA_{23}\cup\bA_{34}\subset \bA_k^4$ and $B=\bA_1+\bA_4$. Then 
$(X,B)$ is a toric wlc log variety (with log discrepancy function $\psi=0$), but not a GNC local model.
\end{exmp}

\begin{defn}
A {\em GNC (NC) log variety} $(X,B)$ is a wlc log variety such that for every closed point $x\in X$, 
there exists a GNC (NC) local model $(X',B')$ and an isomorphism
of complete local $k$-algebras $\cO_{X,x}^\wedge \simeq \cO_{X',0}^\wedge$, such that 
$(\omega^{[r]}_{(X,B)})_x^\wedge$ corresponds to $(\omega^{[r]}_{(X',B')})_0^\wedge$ for $r$ 
sufficiently divisible.
\end{defn}

By~\cite{Ar69}, there exists a common \'etale neighborhood 
$$
\xymatrix{
   & (U,y)  \ar[dl]_i \ar[dr]^{i'} &  \\
(X,x)           &  & (X',0)  
}
$$ 
and a wlc log variety structure $(U,B_U)$ on $U$ such that $i^* \omega^{[n]}_{(X,B)}=
\omega^{[n]}_{(U,B_U)}={i'}^* \omega^{[n]}_{(X',B')}$ for all $n\in \Z$.

It follows that $(X,0)$ is a GNC (NC) log variety, and $B$, $B^{=1}$, $B^{<1}$ are $\Q$-Cartier divisors.

\begin{rem}
Let $(X,B)$ be a NC log variety. Let $\omega_X$ be the canonical choice of dualizing sheaf,
defined by Rosenlicht. It is an invertible $\cO_X$-module, since $X$ is locally complete intersection.
If $rB$ has integer coefficients and $r$ is divisible by $2$, then 
$\omega_X^{\otimes r}\otimes \cO_X(rB)=\omega^{[r]}_{(X,B)}$ (see~\cite{TFR2}).
\end{rem}


\subsection{Simplicial log structure induced by a GNC log variety}

Let $(X,B)$ be a GNC log variety.
Let $\epsilon\colon X_\bullet\to X$ be the simplicial resolution induced by the normalization of $X$.
A GNC log variety is n-wlc. By Proposition~\ref{slv}, residues induce a natural simplicial log variety
structure $(X_\bullet,B_{\bullet})$. In this case $(X_n,B_n)$ is a disjoint union of log smooth
log varieties, and we have residue isomorphisms
$$
\Res^{[r]}_{X\to X_n}\colon \epsilon_n^*\omega^{[r]}_{(X,B)} \isoto \omega^{[r]}_{(X_n,B_n)}
$$
for $r\in (2\Z)_{>0}$ such that $rB$ has integer coefficients.

\begin{lem}\label{slv2} The following properties hold:
\begin{itemize}
\item[1)] $\epsilon\colon X_\bullet\to X$ is a smooth simplicial resolution, and $\cO_X\to R\epsilon_*\cO_{X_\bullet}$
is a quasi-isomorphism.
\item[2)] The lc centers of $(X,0)$ are the images of the irreducible components of $X_n\ (n\ge 0)$.
\item[3)] $(X_n,B_n)$ is a log smooth variety, for all $n$.
\item[4)] The support of $B$ contains no lc center of $(X,0)$, and each $\epsilon_n^*B$ is supported by $B_n$.
\end{itemize}
\end{lem}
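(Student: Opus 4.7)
The plan is to reduce every assertion to the GNC local model by \'etale local triviality, and then combine the combinatorial structure of GNC local models with Proposition~\ref{slv}. By the definition of a GNC log variety and by \cite{Ar69}, each $x\in X$ admits an \'etale neighborhood $U$ together with a compatible \'etale morphism to a GNC local model $(X',B')$, matching the log canonical sheaves. Since the formation of $X_\bullet$, $B_\bullet$ and the residue isomorphisms is compatible with \'etale base change, it suffices to verify each item on the local model $X'=\cup_F\bA_F\subset\bA^N$ with $B'=(\sum_{i\in\sigma}b_iH_i)|_{X'}$.

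I would begin with 3), which is immediate from the description $X'_n=\sqcup_{(F_0,\ldots,F_n)}\bA_{F_0\cap\cdots\cap F_n}$ (each component an affine space) together with Proposition~\ref{slv}(a): the residue boundary on $\bA_{F_0\cap\cdots\cap F_n}$ is computed from the single log discrepancy function $\psi=\sum_{i\in\sigma}(1-b_i)m_i$, producing coefficient $1$ on the coordinate hyperplanes indexed by $(F_0\cap\cdots\cap F_n)\setminus\sigma$ and coefficient $b_i$ on those indexed by $\sigma$, all in $\Q\cap[0,1]$ and forming an NC divisor. Part 2) then follows at once from Proposition~\ref{slv}(a) applied to $(X,0)$, together with the observation that the lc centers of the log smooth $(X_n,0)$ are precisely its irreducible components.

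For 1), smoothness of each $X_n$ is built into 3). The quasi-isomorphism $\cO_X\to R\epsilon_*\cO_{X_\bullet}$ is local on $X$; on the local model $X'$ is a reduced toric variety whose normalization $\bar X'=\sqcup_F\bA_F$ is smooth, which forces $X'$ to be seminormal. I would then invoke the classical cohomological descent for the normalization nerve: for a seminormal variety the augmented \v Cech complex $\cO_X\to \pi_*\cO_{\bar X}\to\pi_*\cO_{\bar X\times_X\bar X}\to\cdots$ is exact, which is exactly the desired quasi-isomorphism.

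Finally, 4) reduces to a direct combinatorial computation on the local model. Since the log discrepancy function of $(X,0)$ is $\psi_0=\sum_{i\in\sigma}m_i$, the lc centers of $(X,0)$ are the subvarieties $X_\gamma=\bA_\gamma$ with $\sigma\subset\gamma$; the interior of $\bA_\gamma$ avoids every $H_i$ with $i\in\sigma$, while $\Supp B\subset \bigcup_{i\in\sigma,\,b_i>0}H_i$, so $X_\gamma\not\subset\Supp B$. On the component $\bA_{F_0\cap\cdots\cap F_n}$ of $X_n$, one has $\epsilon_n^*B=\sum_{i\in\sigma}b_i\bA_{(F_0\cap\cdots\cap F_n)\setminus i}$, whose support is contained in that of $B_n$ (both carry the coefficient $b_i$ on the same divisor). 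The main obstacle I expect is in 1): one must genuinely use that GNC local models, despite being reducible and non-normal, are always seminormal, so that the familiar cohomological descent for the normalization nerve applies; the remaining steps are essentially bookkeeping of the toric data already packaged by Proposition~\ref{slv}.
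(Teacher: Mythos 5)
Your overall strategy (reduce to the GNC local model via \cite{Ar69}, then compute with the toric data) is the same as the paper's, and your treatments of 3) and 4) are fine. But there are two genuine gaps.

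The serious one is in 2). Proposition~\ref{slv}(a) only tells you that every lc center of $(X,0)$ is the image of some \emph{lc center of} $(X_n,B_n)$ --- and the induced structure on $X_n$ is $(X_n,B_n)$ with $B_n$ containing the conductor with coefficient $1$ even when $B=0$, so its lc centers include proper strata of $B_n^{=1}$, not just the irreducible components of $X_n$. Your remark that the lc centers of ``the log smooth $(X_n,0)$'' are its components is therefore beside the point, and the hard direction of 2) --- that every lc center $X_\gamma$ of $(X,0)$, i.e.\ every $\gamma\in\Delta$ with $\sigma\prec\gamma$, is the image of a \emph{full} irreducible component of some $X_n$ --- does not ``follow at once.'' It requires showing that every such face $\gamma$ is an intersection of facets $F_0\cap\cdots\cap F_n$; the paper proves this by writing $\gamma$ as the intersection of the codimension-one faces $\tau\prec F$ containing it (for a facet $F\supseteq\gamma$) and invoking axiom b) of the GNC local model to realize each such $\tau$ as $F\cap F'$. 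This is precisely where the GNC axioms, as opposed to general n-wlc structure, enter, and it is the core content of the lemma; your proposal omits it entirely.

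The second gap is in 1). There is no ``classical cohomological descent for the normalization nerve of a seminormal variety'': seminormality only controls the degree-zero part of the complex $\cO_X\to\pi_*\cO_{\bar X}\to\pi_*\cO_{(\bar X/X)^2}\to\cdots$ (it says $\cO_X$ is the equalizer of the first two arrows), and exactness in higher degrees is an additional, nontrivial statement. The paper cites \cite[Theorem 0.1.b)]{TFR1} for exactly this. On the local model one can also see it directly from the $\Z^N$-grading: the degree-$a$ piece of the complex is the augmented cochain complex of the full simplex on the set of facets containing $\Supp(a)$, hence exact; but some such argument (or the citation) is needed, and ``$X$ is seminormal'' is not a sufficient reason.
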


\begin{proof} We may suppose $(X,B)$ is a GNC local model. 
Then 
$$
(X_n,B_n)=\sqcup_{F_0,\ldots,F_n}(\bA_{F_0\cap \cdots \cap F_n},
\sum_{i\in F_0\cap\cdots \cap F_n}\bA_{F_0\cap\cdots \cap F_n\setminus i}+
\sum_{i\in \sigma}b_i\bA_{F_0\cap\cdots \cap F_n\setminus i}).
$$

1) Each $X_n$ is smooth, so $\epsilon\colon X_\bullet\to X$ is a smooth simplicial resolution.
By~\cite[Theorem 0.1.b)]{TFR1},  $\cO_X\to R\epsilon_*\cO_{X_\bullet}$ is a quasi-isomorphism.

2) The log variety $(X,0)$ has log discrepancy function $\psi=\sum_{i\in \sigma}m_i\in \relint\sigma$. 
Therefore its lc centers are $X_\gamma$, where $\sigma\prec \gamma\in \Delta$. We claim that 
each such $\gamma$ is an intersection of facets of $\Delta$. Indeed, if $\gamma$ is a facet, the claim 
holds. Else, choose a facet $F$ which contains $\gamma$. Since $\gamma\subsetneq F$, $\gamma$
is the intersection after all codimension one faces $\tau\prec F$ which contain $\gamma$. Each $\tau$
contains the core $\sigma$. Therefore $\tau=F\cap F'$ for some facet $F'$, by axiom b) in the definition of 
GNC local models. We conclude that $\gamma=F_0\cap \cdots \cap F_n$ for some $n\ge 0$.
Therefore $X_\gamma$ appears as an irreducible component of $X_n$.

3) This is clear from the explicit formula for $(X_n,B_n)$.

4) The support of $B$ does not contain the core $X_\sigma$. Since the image on $X$ of an
irreducible component of $X_n$ does contain $X_\sigma$, we obtain that $\epsilon_n^*B$ is
well $\Q$-Cartier defined for all $n$. Moreover, 
$$
(B_n-\epsilon_n^*B)|_{\bA_{F_0\cap \cdots \cap F_n}}=\sum_{i\in F_0\cap\cdots \cap F_n}
\bA_{F_0\cap\cdots \cap F_n\setminus i}.
$$
\end{proof}


\section{Vanishing theorems}


\begin{lem}\label{EVsmooth}
Let $(X,B)$ be a log smooth variety. 
Let $L$ be a Cartier divisor on $X$ such that $L\sim_\Q K_X+B$. Let $D$ be an 
effective Cartier divisor supported by $B$. Let $f\colon X\to Z$ be a proper morphism.
Then the natural homomorphisms
$
R^qf_*\cO_X(L)\to R^qf_*\cO_X(L+D)
$
are injective.
\end{lem}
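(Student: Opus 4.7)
The plan is to invoke the classical Esnault--Viehweg injectivity theorem for log smooth pairs. Its proof, which I would follow, goes through a cyclic covering construction combined with the $E_1$-degeneration of the Hodge to de Rham spectral sequence for an open smooth variety.

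First, I would construct a suitable cyclic cover adapted to the data $(L,B,D)$. Choose $r\in 2\Z_{>0}$ so that $rB$ is integral and $\cO_X(rL)\simeq \cO_X(r(K_X+B))$; then $\cL:=\cO_X(L-K_X)$ satisfies $\cL^{\otimes r}\simeq \cO_X(rB)$, and the section with divisor $rB$ defines a degree-$r$ cyclic cover $\widetilde X\to X$. After normalising and composing with a log resolution of the preimage of $\Supp B\cup \Supp D$, I obtain a smooth variety $\pi\colon Y\to X$ together with an SNC divisor $E\subset Y$ containing $\pi^{-1}(\Supp B\cup \Supp D)$, on which the $\Q$-equivalence becomes integral in the form $\pi^*L\sim K_Y+E_0$ for a reduced subdivisor $E_0\subseteq E$ (up to Galois-invariant exceptional corrections).

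Second, the cyclic covering formalism provides a Galois eigenspace decomposition for pushforwards of line bundles on $Y$: by the projection formula, $\pi_*\cO_Y(\pi^*L)=\cO_X(L)\otimes \pi_*\cO_Y$, and the $\mu_r$-invariant eigenspace of $\pi_*\cO_Y$ is $\cO_X$, so the trivial-eigenspace part of $\pi_*\cO_Y(\pi^*L)$ is $\cO_X(L)$, and likewise for $L+D$. The natural inclusion $\cO_X(L)\hookrightarrow \cO_X(L+D)$ that we wish to prove injective on $R^qf_*$ is recovered as the trivial-eigenspace part of the analogous inclusion on $Y$.

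Third, I would establish the injectivity upstairs via Hodge theory. Since $(Y,E)$ is log smooth with $E$ a reduced SNC divisor, Deligne's theorem gives the $E_1$-degeneration of the Hodge to de Rham spectral sequence computing $H^\bullet(Y\setminus E,\C)$, equivalently the strictness of the Hodge filtration on $R\Gamma(Y,\Omega^\bullet_Y(\log E))$. A standard consequence, for the proper morphism $f\circ\pi\colon Y\to Z$, is the injectivity
$$
R^q(f\circ\pi)_*\cO_Y(\pi^*L)\longrightarrow R^q(f\circ\pi)_*\cO_Y(\pi^*L+\pi^*D).
$$
Taking $\mu_r$-invariants on both sides yields the claimed injection $R^qf_*\cO_X(L)\hookrightarrow R^qf_*\cO_X(L+D)$.

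The main obstacle I anticipate is the cyclic-covering bookkeeping: one must verify that the construction really produces a log smooth pair $(Y,E)$ with $\pi^*L$ congruent modulo $\Q$-linear equivalence to $K_Y+E_0$ for a reduced subdivisor $E_0\subseteq E$, tracking fractional corrections from the $r$-th root construction and exceptional corrections from the log resolution, and that the trivial-eigenspace extraction cleanly produces $\cO_X(L)$ and $\cO_X(L+D)$. Once these identifications are fixed, the remaining Hodge-theoretic input is the standard $E_1$-degeneration for an open smooth variety, and the injection descends from $Y$ to $X$ without further difficulty. In practice the entire argument is packaged as the main injectivity theorem in Esnault--Viehweg's \emph{Lectures on vanishing theorems}.
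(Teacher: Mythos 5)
There is a genuine gap, and it is exactly at the point where your sketch says the injection ``descends from $Y$ to $X$ without further difficulty.'' The lemma is a \emph{relative} statement: $f\colon X\to Z$ is proper but $X$ itself need not be proper (the relevant case is $Z$ affine). Your third step invokes Deligne's $E_1$-degeneration for $(Y,E)$, but that theorem requires $Y$ to be proper; for a non-proper total space over an affine base there is no ``standard consequence'' giving injectivity of $R^q(f\circ\pi)_*$. The actual content of the paper's proof is precisely this reduction: compactify $Z$ projectively, use Nagata and Hironaka to produce a smooth proper $X'\supset X$ with $f$ extending to $f'\colon X'\to Z'$ and $\Sigma=X'\setminus X$ normal crossings; extend $L,B,D$ by closure, absorb the unbounded coefficients along $\Sigma$ by the rounding $P=L'-\lfloor N\rfloor$, $\Delta=B'+\{N\}$; twist by ${f'}^*(mH)$ for $m\gg 0$; apply the \emph{projective} injectivity theorem on $X'$; and then use Serre vanishing plus the Leray spectral sequence to convert the global injectivity into injectivity of $R^nf'_*\cF\to R^nf'_*\cF(D')$, which restricts over $Z$ to the claim. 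None of this bookkeeping is optional, and your proposal does not address it.

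A secondary point: the statement allows $D$ to be supported by all of $B$, including components with coefficient $1$, i.e.\ $D$ may contain lc centers of $(X,B)$. This is \emph{not} covered by the injectivity theorem as packaged in Esnault--Viehweg's Lectures, whose hypotheses confine the auxiliary divisor to the locus where the coefficients are fractional; the strengthening to arbitrary $D$ supported by $B$ is the main theorem of the author's earlier paper \cite{Amb14}, which is what the present proof quotes for the projective case. Your cyclic-cover-plus-degeneration outline is essentially the proof of that result (one must reduce to the reduced-boundary case and use the injectivity of $H^b(X,\Omega^a_X(\log E))\to H^b(X,\Omega^a_X(\log E)\otimes\cO_X(D''))$ for $D''$ supported on $E$, rather than the packaged theorem), so the Hodge-theoretic input is in the right spirit; but as written your argument neither justifies the coefficient-one case nor, more importantly, handles the passage from the projective setting to a proper morphism over a general base.
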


\begin{proof}
We may suppose $X$ is irreducible, $f$ is surjective, and $Z$ is affine. 
Let $Z\hookrightarrow \bA^N$ be a closed embedding into an affine space.
Compactify $\bA^N\subset \bP^N$ by adding the hyperplane at infinity $H_0$.
Let $Z'\subset \bP^N$ be the closure of $Z$. Let $H=H_0|_{Z'}$. Then $Z\subset Z'$
is an open dense embedding, whose complement $H$ is a hyperplane section.

By Nagata, there exists an open dense embedding $X\subset X''$ such that $X''$ is 
proper. The induced rational map $f\colon X''\dashrightarrow Z'$ is regular on $X$.
By Hironaka's desingularization, there exists a birational contraction 
$X'\to X''$, which is an isomorphism over $X$, such that $X'$ is smooth and $f$ induces
a regular map $f'\colon X'\to Z'$. We may also suppose $\Sigma=X'\setminus X$ is a 
NC divisor, and $(X',B'+\Sigma)$ is log smooth, where $B'=\sum_i b_i(E_i)'$ is the closure 
of $B$ in $X'$ (defined componentwise). We obtained a diagram
\[ 
\xymatrix{
 X \ar[d]_f  \ar[r] & X' \ar[d]^{f'}  \\
 Z \ar[r]  & Z'   
} \]
where the vertical arrows are open dense embeddings, $Z'$ is projective and $X'$ is proper.
The properness of $f$ is equivalent to $X={f'}^{-1}(Z)$, so the diagram is also cartesian.

We represent $L$ by a Weil divisor on $X$. Let $L'$ be its closure in $X'$.
Then $L'\sim_\Q K_{X'}+B'+N$, where $N$ is a $\Q$-divisor supported by $\Sigma$.
Denote $P=L'-\lfloor N\rfloor$ and $\Delta=B'+\{N\}$. Then 
$P\sim_\Q K_{X'}+\Delta$ and $(X',\Delta)$ is log smooth. The closure $D'$ of $D$
in $X'$ is supported by $B'$, hence it is supported by $\Delta$.

Let $m$ be a positive integer. Let $S$ be a general member of the free linear system $| {f'}^*(mH) |$.
Then $P+{f'}^*(mH)\sim_\Q K_{X'}+\Delta+S$, $(X',\Delta+S)$ is log smooth, and $D'$ is supported by 
$\Delta+S$. Denote $\cF=\cO_{X'}(P)$. By~\cite[Theorem 0.1]{Amb14},  the natural homomorphism 
$$
H^n(X',\cF({f'}^*(mH)) )\to H^n(X', \cF({f'}^*(mH)+D')) \ (n\ge 0)
$$
is injective. We have the Leray spectral sequence 
$$
E^{pq}_2=H^p(Z',R^q f'_*\cF(m))\Longrightarrow H^{p+q}(X',\cF( {f'}^*(mH)) ).
$$
Suppose $m$ is sufficiently large. Serre vanishing gives $E^{pq}_2=0$ if $p\ne 0$.
Therefore we obtain a natural isomorphism $H^0(Z',R^n{f'}_*\cF(m))\isoto H^n(X',\cF({f'}^*(mH)) )$.
By the same argument, we have a natural isomorphism 
$H^0(Z',R^n f'_*\cF(D')(m))\isoto H^n(X',\cF(D'+{f'}^*(mH)))$. The injective homomorphism
above becomes the injective homomorphism 
$$
H^0(Z',R^n  f'_*\cF(m)) \to H^0(Z',R^n f'_*\cF(D')(m)).
$$
Since $\cO_{Z'}(m)$ is very ample, this means that $R^n  f'_*\cF \to R^n f'_*\cF(D')$ is injective.
But $X={f'}^{-1}(Z)$, $P|_X=L$, $\cF|_X=\cO_X(L)$ and $D'|_X=D$, so the restriction of this injective 
homomorphism to $Z$ is just the injective homomorphism $R^n f_*\cO_X(L)\to R^n f_*\cO_X(L+D)$.
\end{proof}

\begin{thm}[Esnault-Viehweg injectivity]\label{EVGNC}
Let $(X,B)$ be a GNC log variety. Let $\cL$ be an invertible $\cO_X$-module
such that $\cL^{\otimes r}\simeq \omega^{[r]}_{(X,B)}$ for some $r\ge 1$ such that $rB$ has integer
coefficients. Let $D$ be an effective Cartier divisor supported by $B$.
Let $f\colon X\to Z$ be a proper morphism. Then the natural homomorphism $R^if_* \cL \to R^if_* \cL(D)$ 
is injective, for every $i$.
\end{thm}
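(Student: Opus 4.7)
The plan is to reduce to the log smooth case (Lemma~\ref{EVsmooth}) via the simplicial resolution $\epsilon\colon X_\bullet\to X$ coming from the normalization, as set up in Proposition~\ref{slv} and analyzed in Lemma~\ref{slv2}. By Lemma~\ref{slv2}(3) each $(X_n,B_n)$ is log smooth, and by Proposition~\ref{slv} the residue isomorphisms $\Res^{[r]}_{X\to X_n}\colon\epsilon_n^*\omega^{[r]}_{(X,B)}\isoto\omega^{[r]}_{(X_n,B_n)}$ assemble into a simplicial object. Set $\cL_n:=\epsilon_n^*\cL$ and $D_n:=\epsilon_n^*D$. The former satisfies $\cL_n^{\otimes r}\simeq\omega^{[r]}_{(X_n,B_n)}$, so choosing a Cartier representative $L_n$ of $\cL_n$ gives $L_n\sim_\Q K_{X_n}+B_n$; the latter is Cartier (as $D$ is) and supported by $B_n$ by Lemma~\ref{slv2}(4). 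Since each $\epsilon_n$ is finite, $f\epsilon_n\colon X_n\to Z$ is proper, and Lemma~\ref{EVsmooth} applied componentwise yields the termwise injections
$$
R^q(f\epsilon_n)_*\cL_n\hookrightarrow R^q(f\epsilon_n)_*\cL_n(D_n),\qquad n,q\ge 0.
$$

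The next step is to assemble these via cohomological descent. By Lemma~\ref{slv2}(1) one has $\cO_X\isoto R\epsilon_*\cO_{X_\bullet}$, and since $\cL$ and $\cL(D)$ are invertible, the projection formula upgrades this to quasi-isomorphisms $\cL\isoto R\epsilon_*\epsilon^*\cL$ and $\cL(D)\isoto R\epsilon_*\epsilon^*\cL(D)$. In particular, $Rf_*\cL=R(f\epsilon)_*\epsilon^*\cL$ is computed by the descent spectral sequence
$$
E_1^{p,q}(\cL)=R^q(f\epsilon_p)_*\cL_p\Longrightarrow R^{p+q}f_*\cL,
$$
and similarly for $\cL(D)$. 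The inclusion $\cL\hookrightarrow\cL(D)$ induces a morphism of these spectral sequences which, by the previous paragraph, is injective on every $E_1$-term.

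The main obstacle is to deduce injectivity on the abutments, since a morphism of spectral sequences that is injective at $E_1$ need not remain injective at $E_\infty$. I would handle this by modelling both spectral sequences on a common filtered complex: take a Cartan--Eilenberg (or Godement) resolution of the \v{C}ech-type complex $\epsilon_{\bullet,*}\cL_\bullet$, totalize to get a filtered complex of $f_*$-acyclic sheaves representing $Rf_*\cL$, and do the same for $\cL(D)$. The inclusion lifts to a termwise injection of totalizations, and the task reduces to checking that this injection is \emph{strict} with respect to the filtrations, so that $E_1$-injectivity on graded pieces propagates to injectivity on $E_\infty$-graded pieces and hence on the abutments. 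The structural input that makes this work is the compatibility of the residue isomorphisms with simplicial face maps recorded in Proposition~\ref{slv}(b), namely $\Res^{[r]}_\varphi\circ\varphi^*\Res^{[r]}_{X\to X_n}=\Res^{[r]}_{X\to X_m}$, which forces the simplicial differentials $d_1$ (and the induced higher $d_r$) to act in the same way on the $\cL_\bullet$- and $\cL(D)_\bullet$-sides. Once strictness is verified, the required injection $R^nf_*\cL\hookrightarrow R^nf_*\cL(D)$ follows by standard homological algebra of strictly filtered complexes.
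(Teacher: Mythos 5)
Your reduction to the log smooth case via the simplicial resolution is the same as the paper's, and you correctly identify the crux: a morphism of spectral sequences that is injective on $E_1$ need not be injective on the abutments, because a class in the image of the map can become a boundary of an element that is \emph{not} in the image. But your proposed fix does not close this gap. The compatibility $\Res^{[r]}_\varphi\circ\varphi^*\Res^{[r]}_{X\to X_n}=\Res^{[r]}_{X\to X_m}$ only guarantees that $\cL_\bullet\hookrightarrow\cL_\bullet(D)$ is a morphism of simplicial sheaves, i.e.\ that you have a morphism of filtered complexes at all; it says nothing about strictness. ``The differentials act in the same way on both sides'' is true for any morphism of filtered complexes and is precisely not the point: the failure of strictness is caused by elements of the larger complex lying outside the image of the smaller one. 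As written, the strictness you invoke is essentially equivalent to the statement being proved, and you give no argument for it.

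The paper's mechanism is different and supplies the missing idea. First, since $rB$ is Cartier, injectivity of $R^if_*\cL\to R^if_*\cL(D)$ for all $D$ supported by $B$ is equivalent to injectivity of the restriction $H^i(X,\cL)\to H^i(U,\cL|_U)$ with $U=X\setminus\Supp B$ (pass to the direct limit over the sheaves $\cO_X(mrB)$). This replaces the two-term inclusion $\cL\hookrightarrow\cL(D)$ by the three-term local cohomology sequence $0\to A\to B\to C\to 0$ built from Godement resolutions, which is \emph{termwise split} at the $E_0$-page of the simplicial filtration because Godement resolutions are flasque; hence it remains split exact on every page $E_r$ and on $E_\infty$. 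Second, Lemma~\ref{EVsmooth} applied on each $X_p$ says that $H^q_{\Sigma_p}(X_p,\cL_p)\to H^q(X_p,\cL_p)$ is \emph{zero}, and ``zero at $E_1$'' (unlike ``injective at $E_1$'') propagates automatically to every $E_r$ and to $E_\infty$, since each page is a subquotient of the previous one. Combining these two facts yields injectivity on the $E_\infty$-graded pieces of $H^i(X_\bullet,\cL_\bullet)\to H^i(U_\bullet,\cL_\bullet|_{U_\bullet})$, and finiteness of the simplicial filtration then gives injectivity on the abutments. Without this conversion of ``injective'' into ``the complementary map vanishes,'' your descent step does not go through.
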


\begin{proof}
We may suppose $Z$ is affine.
Denote $\Sigma=\Supp B$ and $U=X\setminus \Sigma$. Since $rB$ is Cartier, we have 
an isomorphism 
$
\varinjlim_{m\in \N} H^i (X,\cO_X(mrB)) \isoto H^i (U,\cL|_U).
$
The claim for all 
$D$ is thus equivalent to the injectivity of the restriction homomorphisms
$$
H^i(X,\cL)\to H^i(U,\cL|_U).
$$
Let $\epsilon\colon X_\bullet\to X$ be the smooth simplicial resolution induced by the normalization of $X$.
Let $\Sigma_n=\epsilon_n^{-1}(\Sigma)$ and $U_n=X_n\setminus \Sigma_n$. The restriction 
$\epsilon\colon U_\bullet\to U$ is also a smooth simplicial resolution. 
By Lemma~\ref{slv2}, $\cL\to R\epsilon_*\cL_\bullet$ and $\cL|_U\to R\epsilon_*\cL_\bullet|_{U_\bullet}$ are
quasi-isomorphisms. Therefore the claim is equivalent to the injectivity of the restriction homomorphisms
$$
\alpha\colon H^i(X_\bullet,\cL_\bullet)\to H^i(U_\bullet,\cL_\bullet|_{U_\bullet}).
$$
Both spaces are endowed with simplicial filtrations $S$.
The Godement resolutions $\cL_p \to \cK^*_p\ (p\ge 0)$ glue to a simplicial resolution
$\cL_\bullet\to \cK^*_\bullet$. 
Denote $A^q_p=\Gamma_{\Sigma_p}(X_p,\cK_p^q)$, $B^q_p=\Gamma(X_p,\cK_p^q)$ and
$C^q_p=\Gamma(U_p,\cK_p^q)$. The associated simple complexes fit into a 
short exact sequence 
$$
0\to A\to B\to C\to 0
$$
which induces in homology the long exact sequence 
$$
\cdots \to H^i_{\Sigma_\bullet}(X_\bullet,\cL_\bullet)\to 
H^i (X_\bullet,\cL_\bullet)\to H^i(U_\bullet,\cL_\bullet|_{U_\bullet} )\to \cdots.
$$
Let $S$ be the simplicial filtration (naive with respect to $p$) on $A, B,C$.
For each $p$, the short exact sequence
$$
0\to \Gamma_{\Sigma_p}(X_p,\cK_p^*)\to \Gamma(X_p,\cK_p^*)\to \Gamma(U_p,\cK_p^*)\to 0
$$
is split. That is 
$
0\to E_0A\to E_0B\to E_0C\to 0
$
is a split short exact sequence. Passing to homology, we obtain that 
$
0\to E_1A\to E_1B\to E_1C\to 0
$
is a split short exact sequence. Iterating this argument,  we conclude that 
$
0\to E_rA\to E_rB\to E_rC\to 0
$
is a split short exact sequence, for every $r$. Therefore 
$
0\to E_\infty A\to E_\infty B\to E_\infty C\to 0
$
is a short exact sequence, which induces in homology the long exact sequence 
$$
\cdots \to \Gr_S H^i_{\Sigma_\bullet}(X_\bullet,\cL_\bullet)\to 
\Gr_S H^i (X_\bullet,\cL_\bullet)\to \Gr_S H^i(U_\bullet,\cL_\bullet|_{U_\bullet})\to \cdots.
$$

{\em Step 1}: $H^q_{\Sigma_p}(X_p,\cL_p)\to H^q(X_p,\cL_p)$ is zero for all $p,q$.
Indeed, 
$$
\cL_p^{\otimes 2r}=\cL^{\otimes 2r}|_{X_p}\simeq \omega^{[2r]}_{(X,B)}|_{X_p} \isoto \omega^{[2r]}_{(X_p,B_p)},
$$
$(X_p,B_p)$ is a log smooth variety, $U_p\supseteq X_p\setminus B_p$ by Lemma~\ref{slv2}.4), 
and $X_p\to Z$ is proper.
By Lemma~\ref{EVsmooth}, $H^q(X_p,\cL_p)\to H^q(U_p,\cL_p|_{U_p})$ is injective for all $p,q$.
Equivalently, 
$$
H^q_{\Sigma_p}(X_p,\cL_p)\to H^q(X_p,\cL_p)
$$ 
is zero for all $p,q$.

{\em Step 2}: $\Gr_S\alpha$ is injective. 
Indeed, $E_1A\to E_1B$ is the direct sum of $H^q_{\Sigma_p}(X_p,\cL_p)\to H^q(X_p,\cL_p)$. 
By Step 1, $E_1A\to E_1B$ is zero. Step by step, we deduce that $E_rA\to E_rB$ 
is zero for every $r\ge 1$. Then $E_\infty A\to E_\infty B$ is zero, that is 
$Gr_S H^i_{\Sigma_\bullet}(X_\bullet,\cL_\bullet)\to \Gr_S H^i (X_\bullet,\cL_\bullet)$ is zero.
Therefore the last long exact sequence breaks up into short exact sequences
$$
0 \to \Gr_S H^i (X_\bullet,\cL_\bullet)\to \Gr_S H^i(U_\bullet,\cL_\bullet|_{U_\bullet})\to 
\Gr_S H^{i+1}_{\Sigma_\bullet}(X_\bullet,\cL_\bullet) \to 0. 
$$

{\em Step 3}:  Since $S_{i+1}H^i(X_\bullet,L_\bullet)=0$, the filtration $S$ on $H^i(X_\bullet,L_\bullet)$
is finite. Therefore the injectivity of $\Gr_S \alpha$ 
means that $\alpha$ is injective and strict with respect to the filtration $S$.
\end{proof}

\begin{lem}\label{TKsmooth}
Let $(X,B)$ be a log smooth variety, let $f\colon X\to Z$ be a proper morphism.
Let $L$ be a Cartier divisor such that the $\Q$-divisor $A=L-(K_X+B)$ is $f$-semiample.
Let $D$ be an effective Cartier divisor on $X$ such that $D\sim_\Q uA$ for some $u>0$,
and $D$ contains no lc center of $(X,B)$.
Then the natural homomorphism $R^qf_*\cO_X(L)\to R^qf_*\cO_X(L+D)$ is injective, for all $q$.
\end{lem}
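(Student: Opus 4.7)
The plan is to reduce Lemma~\ref{TKsmooth} to Esnault--Viehweg injectivity (Lemma~\ref{EVsmooth}) by enlarging $B$ to a log smooth boundary $B'$ whose support contains $D$, while still satisfying $L\sim_\Q K_X+B'$.

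First I would reduce to $Z$ affine (harmless, as the statement of injectivity is local on $Z$) and then to the case where $B+D$ has normal crossings support on $X$, by passing to a log resolution $\mu\colon X'\to X$ of $(X,B+D)$ that is an isomorphism over the NC locus. On $X'$ one has $K_{X'}+B''=\mu^*(K_X+B)+F_-$, with $B''$ a log smooth boundary (using that $(X,B)$ is lc, so the coefficients of $B''$ along exceptional divisors are $\le 1$) and $F_-$ an effective $\mu$-exceptional $\Q$-divisor. Setting $\tilde L=\mu^*L+F_-$ (after passing to a sufficiently divisible multiple to make $F_-$ integral), the hypotheses of the lemma hold for $(X',B'',\tilde L,f\circ\mu,\mu^*D)$, but now with $B''+\mu^*D$ NC. The conclusion is transported back to $X$ via $R\mu_*\cO_{X'}(\tilde L)=\cO_X(L)$ together with the relative vanishing $R^i\mu_*\cO_{X'}(\tilde L)=0$ for $i>0$.

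Assume now that $B+D$ is NC. Using the $f$-semiampleness of $A$, pick $N\gg 0$ such that $NA$ is Cartier and $|NA|_{/Z}$ is base-point free. By Bertini applied to this relative free linear system over the affine $Z$, a general $G\in|NA|_{/Z}$ is smooth and meets $B+D$ transversally, so $B+D+G$ remains NC. Fix a small $\epsilon\in\Q_{>0}$ and set $\delta=(1-\epsilon u)/N>0$. Then $\epsilon D+\delta G\sim_\Q(\epsilon u+\delta N)A=A$, hence
\[
L\sim_\Q K_X+B',\qquad B':=B+\epsilon D+\delta G.
\]
The support of $B'$ is NC by construction. For the coefficients: on components of $G$ one has $\delta\le 1$ for $N$ large; on a component $D_i$ of $D$ that is also a component of $B$, the coefficient of $D_i$ in $B$ is strictly less than $1$ (because $D$ contains no lc center of $(X,B)$, so $D_i$ is not a component of $B^{=1}$), hence adding $\epsilon\cdot\mult_{D_i}(D)$ keeps the total in $[0,1]$ for $\epsilon$ small enough; on a component of $D$ disjoint from $B$ the coefficient is just $\epsilon\cdot\mult\le 1$. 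Thus $(X,B')$ is log smooth, and $\Supp D\subseteq\Supp B'$ since $\epsilon>0$. Applying Lemma~\ref{EVsmooth} to $(X,B')$, $L$, $f$, and the effective Cartier divisor $D\subseteq\Supp B'$ yields the desired injectivity $R^qf_*\cO_X(L)\to R^qf_*\cO_X(L+D)$.

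The main obstacle is the first reduction: when a component of $D$ is singular (e.g.\ a cuspidal curve) or crosses $B$ non-transversally, the naive $B':=B+\epsilon D+\delta G$ fails to be log smooth, and a genuine log resolution $\mu$ is required. The technically delicate step is then the relative vanishing $R^i\mu_*\cO_{X'}(\tilde L)=0$ for $i>0$, without which the injectivity on $X'$ does not descend to $X$. One proves it using $\tilde L-K_{X'}\sim_\Q B''+\mu^*A$ with $(X',B'')$ log smooth and $\mu^*A$ numerically $\mu$-trivial, via a Grauert--Riemenschneider/Koll\'ar-type argument.
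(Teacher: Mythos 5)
Your overall strategy coincides with the paper's: pass to a log resolution to reduce to the case where $B+\epsilon D$ has normal crossings support, then absorb the remaining positivity $(1-\epsilon u)A$ into the boundary by adding a small multiple of a general member of a free multiple of $A$, and conclude by Lemma~\ref{EVsmooth}. The second half of your argument is exactly the paper's first step, including the key observation that no component of $D$ lies in $B^{=1}$ because $D$ contains no lc center, so $B+\epsilon D$ is still a boundary for small $\epsilon$.

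The genuine divergence, and the weak point, is the descent from $X'$ to $X$. You ask for the full relative vanishing $R^i\mu_*\cO_{X'}(\tilde L)=0$ for $i>0$, to be justified by a ``Grauert--Riemenschneider/Kawamata--Viehweg'' argument using that $\mu^*A$ is numerically $\mu$-trivial. This does not follow from those theorems: one has $\tilde L-(K_{X'}+B'')\equiv_\mu 0$, but $(X',B'')$ is only lc, not klt --- $B''$ may contain $\mu$-exceptional components of coefficient $1$ (for instance the exceptional divisor over a stratum of $B^{=1}$), so relative Kawamata--Viehweg does not apply, and Grauert--Riemenschneider only covers the case $F_-=K_{X'}-\mu^*K_X$, not the intermediate divisor $F_-=\lceil -B_{X'}^{<0}\rceil$. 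The correct statement is Fujino's relative vanishing lemma for snc lc pairs, whose proof itself rests on Koll\'ar-type torsion-freeness for log smooth pairs; it is available in the literature, but it is much heavier than necessary and sits awkwardly in the logical order (injectivity before torsion-freeness) of this paper. The paper sidesteps the issue entirely: since one only needs \emph{injectivity} of $H^q(X,\cO_X(L))\to H^q(X',\cO_{X'}(\mu^*L+E))$, not an isomorphism of derived pushforwards, it suffices that $\cO_X\to R\mu_*\cO_{X'}(E)$ admit a left inverse; this follows from $0\le E\le K_{X'}-\mu^*K_X$ together with the trace splitting of $\omega_X\to R\mu_*\omega_{X'}$ (Deligne). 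Replacing your relative-vanishing step with this splitting argument closes the gap, and the rest of your proof then goes through.
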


\begin{proof} We may suppose $Z$ is affine, and $A$ is $f$-semiample.

{\em Step 1}: Suppose $(X,B+\epsilon D)$ is log smooth, for some $0<\epsilon<\frac{1}{u}$.
We have 
$$
L=K_X+B+\epsilon D+(A-\epsilon D)\sim_\Q K_X+B+\epsilon D+(1-\epsilon u) A.
$$
Let $n\ge 1$ such that $\cO_X(nA)$ is generated by global sections. Let $S$ be the
zero locus of a generic global section. Then 
$$
L\sim_\Q K_X+B+\epsilon D+\frac{1-\epsilon u}{n} S,
$$
the log variety $(X,B+\epsilon D+\frac{1-\epsilon u}{n} S)$ is log smooth, and its boundary
supports $D$. By Lemma~\ref{EVsmooth}, $H^q(X,\cO_X(L)) \to H^q(X,\cO_X(L+D))$
is injective, for all $q$.

{\em Step 2}: By Hironaka, there exists a desingularization $\mu\colon X'\to X$ such that
the exceptional locus of $\mu$ and the proper transforms of $B$ and $D$ are supported
by a NC divisor on $X'$. Let $\mu^*(K_X+B)=K_{X'}+B_{X'}$, let $E=\lceil -B_{X'}^{<0}\rceil$.
Then 
$$
\mu^*L+E=K_{X'}+B_{X'}^{\ge 0}+\{B_{X'}^{<0} \}+\mu^*A.
$$
The log variety $(X',B_{X'}^{\ge 0}+\{B_{X'}^{<0} \}+\epsilon \mu^*D)$ is log smooth for $0<\epsilon\ll 1$,
by the choice of the resolution, and since $D$ contains no lc centers of $(X,B)$. We also have 
$\mu^*D\sim_\Q u \mu^*A$. By Step 1, the natural homomorphisms
$$
H^q(X',\cO_{X'}(\mu^*L+E)) \to H^q(X',\cO_{X'}(\mu^*L+E+\mu^*D))
$$
are injective. Consider now the commutative diagram
$$
\xymatrix{
H^q(X',\cO_{X'}(\mu^*L+E))  \ar[rr]^{\alpha'}  & &  H^q(X',\cO_{X'}(\mu^*L+E+\mu^*D))  \\
H^q(X,\cO_X(L))  \ar[rr]^\alpha  \ar[u]^\beta &  & H^q(X,\cO_X(L+D))   \ar[u]
}
$$
From above, $\alpha'$ is injective. If $\beta$ is injective, it follows that $\alpha$ is injective.
To show that $\beta$ is injective, suffices to show that $\cO_X\to R\mu_*\cO_{X'}(E)$
has a left inverse. The Cartier divisor $E'=K_{X'}-\mu^*K_X$ is effective, and $-B_{X'}\le E'$. 
Therefore $E\le E'$. We obtain homomorphisms
$$
\cO_X\to R\mu_*\cO_{X'}(E)\to R\mu_*\cO_{X'}(E').
$$
Suffices to show that the composition has a left inverse. Tensoring with $\omega_X$,
this is just the homomorphism $\omega_X\to R\mu_*\omega_{X'}$, which admits a left inverse
defined by trace (see the proof of~\cite[Proposition 4.3]{Del69}).
\end{proof}

\begin{thm}[Tankeev-Koll\'ar injectivity]\label{TKGNC}
Let $(X,B)$ be a GNC log variety, let $f\colon X\to Z$ be a proper morphism.
Let $\cL$ be an invertible $\cO_X$-module such that $\cL^{\otimes r}\simeq \omega^{[r]}_{(X,B)}\otimes \cH$,
where $r\ge 1$ and $rB$ has integer coefficients, and $\cH$ is an invertible $\cO_X$-module 
such that $f^*f_*\cH\to \cH$ is surjective. 
Let $s\in \Gamma(X,\cH)$ be a global section which is invertible at the generic point of each
lc center of $(X,B)$, let $D$ be the effective Cartier divisor defined by $s$. In particular, $D$ 
contains no lc center of $(X,B)$.
Then the natural homomorphism $R^qf_*\cO_X(L)\to R^qf_*\cO_X(L+D)$ is injective, for all $q$.
\end{thm}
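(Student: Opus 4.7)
The plan is to imitate the proof of Theorem~\ref{EVGNC} almost verbatim, replacing Lemma~\ref{EVsmooth} by Lemma~\ref{TKsmooth} in the local step. First reduce to $Z$ affine, so the claim becomes injectivity of $H^i(X,\cL)\to H^i(X,\cL(D))$. Set $\Sigma=\Supp D$ and $U=X\setminus\Sigma$. Since $\cH\simeq \cO_X(D)$ via the section $s$, the map $\cL\to\cL(D)$ is multiplication by $s$ and factors through $j_*(\cL|_U)$ for $j\colon U\hookrightarrow X$. Because $D$ is Cartier, $j$ is affine, so it suffices to prove the stronger statement
$$H^i(X,\cL)\hookrightarrow H^i(U,\cL|_U),$$
which is exactly the form of injectivity handled in Theorem~\ref{EVGNC}.

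Now pass to the smooth simplicial resolution $\epsilon\colon X_\bullet\to X$ induced by the normalization of $X$, and set $U_\bullet=\epsilon^{-1}(U)$. By the projection formula (for the invertible sheaves $\cL$ and $\cL|_U$) together with Lemma~\ref{slv2}.1), both $\cL\to R\epsilon_*\cL_\bullet$ and $\cL|_U\to R\epsilon_*(\cL_\bullet|_{U_\bullet})$ are quasi-isomorphisms. So the question reduces to the injectivity of
$$\alpha\colon H^i(X_\bullet,\cL_\bullet)\to H^i(U_\bullet,\cL_\bullet|_{U_\bullet}).$$

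The new local input, replacing Step~1 of Theorem~\ref{EVGNC}, is the injectivity $H^q(X_p,\cL_p)\hookrightarrow H^q(U_p,\cL_p|_{U_p})$ for every $p,q$, equivalently the vanishing of the map $H^q_{\Sigma_p}(X_p,\cL_p)\to H^q(X_p,\cL_p)$. By the residue isomorphism of Proposition~\ref{slv}.a), $\cL_p^{\otimes r}\simeq \omega^{[r]}_{(X_p,B_p)}\otimes \epsilon_p^*\cH$, so $L_p\sim_\Q K_{X_p}+B_p+A_p$ with $A_p=\frac{1}{r}\epsilon_p^*H$ semiample over $Z$, and $mD_p\sim_\Q mrA_p$ for every $m\ge 1$. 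By Lemma~\ref{slv2}.3) $(X_p,B_p)$ is log smooth, and every lc center of $(X_p,B_p)$ is mapped by $\epsilon_p$ into an lc center of $(X,B)$ (a toric local-model check, cf.\ the proof of Lemma~\ref{slv2}.2)), so $D_p=\epsilon_p^*D$ contains no lc center of $(X_p,B_p)$. Lemma~\ref{TKsmooth} applied to $(X_p,B_p,L_p,mD_p)$ yields injections $H^q(X_p,\cL_p)\hookrightarrow H^q(X_p,\cL_p(mD_p))$ for every $m\ge 1$. Taking $\varinjlim_m$ and using that $U_p\hookrightarrow X_p$ is affine (so higher direct images vanish) gives the desired injection into $H^q(U_p,\cL_p|_{U_p})$.

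With this local input in hand, Steps~2--3 of the proof of Theorem~\ref{EVGNC} apply unchanged: the Godement sheaves $\cK_p^q$ are products of skyscrapers over points of $X_p$, so the short exact sequence $0\to\Gamma_{\Sigma_p}(X_p,\cK_p^*)\to\Gamma(X_p,\cK_p^*)\to\Gamma(U_p,\cK_p^*)\to 0$ splits term-by-term; gluing simplicially and filtering by the naive simplicial filtration $S$, the vanishing at the $E_1$-level propagates through the spectral sequence to $E_\infty$; the resulting long exact sequence on graded pieces breaks into short exact sequences, forcing $\Gr_S\alpha$ to be injective; finally, $S$ is finite on $H^i(X_\bullet,\cL_\bullet)$, so $\alpha$ itself is injective. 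The main point to verify beyond mechanically importing Theorem~\ref{EVGNC}'s proof is the compatibility of lc centers under $\epsilon_p$, which is the toric local-model statement already handled in the proof of Lemma~\ref{slv2}.
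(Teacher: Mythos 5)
Your proposal is correct and follows essentially the same route as the paper: reduce to the injectivity of $H^i(X,\cL)\to H^i(U,\cL|_U)$ for $U=X\setminus\Supp D$, pass to the simplicial resolution, and rerun the spectral-sequence argument of Theorem~\ref{EVGNC} with Lemma~\ref{TKsmooth} replacing Lemma~\ref{EVsmooth} in Step~1, the only extra input being that lc centers of $(X_p,B_p)$ map to lc centers of $(X,B)$ so that $\epsilon_p^*D$ contains none of them. The only cosmetic point is that the residue isomorphisms of Proposition~\ref{slv} are stated for even exponents, so one should work with $\cL_p^{\otimes 2r}\simeq\omega^{[2r]}_{(X_p,B_p)}\otimes\cH_p^{\otimes 2}$ as the paper does.
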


\begin{proof} We may suppose $Z$ is affine. In particular, $\cH$ is generated by global sections.
Let $U=X\setminus \Supp D$. The claim for $D$ and all its multiples is equivalent to the
injectivity of the restriction homomorphisms $H^i(X,\cL)\to H^i(U,\cL|_U)$.

The proof is the same as that of Theorem~\ref{EVGNC}, except that in Step 1
we use Lemma~\ref{TKsmooth} instead of Lemma~\ref{EVsmooth}. Indeed, 
$\cL_p^{\otimes 2r}\isoto \omega^{[2r]}_{(X_p,B_p)}\otimes \cH^{\otimes 2}_p$, 
$(X_p,B_p)$ is log smooth, $\cH_p$ is generated by global sections, and 
$\epsilon_p^*D\in |\cH_p|$ contains no lc center of $(X_p,B_p)$. Therefore
$H^q(X_p,\cL_p)\to H^q(U_p,\cL_p|_{U_p})$ is injective, where $U_p=\epsilon_p^{-1}(U)=X_p\setminus \Supp \epsilon^*_pD$.
\end{proof}

\begin{thm}[Koll\'ar's torsion freeness]\label{TFGNC}
Let $(X,B)$ be a GNC log variety. Let $\cL$ be an invertible $\cO_X$-module such that 
$\cL^{\otimes r}\simeq \omega^{[r]}_{(X,B)}$ for some $r\ge 1$ such that $rB$ has integer 
coefficients. Let $f\colon X\to Z$ be a proper morphism. Let $s$ be a local section of $R^qf_*\cL$ 
whose support does not contain $f(C)$, for every lc center $C$ of $(X,B)$. Then $s=0$.
\end{thm}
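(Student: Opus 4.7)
The plan is to deduce this theorem from Tankeev-Koll\'ar injectivity (Theorem~\ref{TKGNC}) by Koll\'ar's classical argument: given a nonzero local section $s$ of $R^qf_*\cL$ whose support avoids every $f(C)$, I will construct an effective Cartier divisor $D$ on $X$ which contains no lc center and whose associated comparison map $R^qf_*\cL \to R^qf_*\cL(D)$ acts, up to a canonical trivialization, as multiplication by some $t^N\in \Gamma(Z,\cO_Z)$ that kills $s$. The resulting injectivity then forces $s=0$.

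First I would reduce to $Z$ affine, say $Z=\Spec R$, and suppose for contradiction that $s\in \Gamma(Z,R^qf_*\cL)$ is nonzero with support $W\subsetneq Z$. The GNC log variety $(X,B)$ has only finitely many lc centers $C_1,\dots,C_k$, as these form a Zariski-local stratification of the Noetherian $X$ visible on the GNC local models. The hypothesis $f(C_j)\not\subseteq W$ for each $j$ means, writing $\fp_j\subset R$ for the prime ideal of $\overline{f(C_j)}$, that the ideal of $W$ is not contained in $\fp_j$. By prime avoidance, I can choose $t\in R$ vanishing on $W$ but not identically on any $f(C_j)$. Since $R^qf_*\cL$ is coherent and $\Supp(s)\subseteq V(t)$, we get $t^N s=0$ for some $N\ge 1$.

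Set $D=N\,f^*\dv(t)=\dv(f^*t^N)$, an effective Cartier divisor on $X$. If $C_j\subseteq D$ for some $j$, then $f(C_j)\subseteq V(t)$, contradicting the choice of $t$; so $D$ contains no lc center of $(X,B)$. I now apply Theorem~\ref{TKGNC} with the trivial twist $\cH=\cO_X$: the relation $\cL^{\otimes r}\simeq \omega^{[r]}_{(X,B)}\otimes \cO_X$ holds, the map $f^*f_*\cO_X\to \cO_X$ is surjective, and the section $s_0=f^*t^N\in \Gamma(X,\cO_X)$ is invertible at the generic point of every lc center by the choice of $t$. The conclusion is that the natural map $R^qf_*\cL\to R^qf_*\cL(D)$ is injective.

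To conclude, the rational function $(f^*t)^{-N}\in K(X)^*$ has divisor $-D$ and so trivializes $\cO_X(D)$, inducing an isomorphism $\cL(D)\isoto \cL$ under which the canonical inclusion $\cL\hookrightarrow \cL(D)$ corresponds to multiplication by $(f^*t)^N$. Applying $R^qf_*$ and using $\cO_Z$-linearity, the injection $R^qf_*\cL\to R^qf_*\cL(D)$ becomes multiplication by $t^N$ on $R^qf_*\cL$. Since $t^N s=0$, this forces $s=0$, a contradiction. I expect the only subtle point to be tracking the identification between the canonical inclusion and multiplication by $(f^*t)^N$, which is a routine divisor calculation once the trivializing rational section is chosen.
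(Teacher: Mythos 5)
Your proposal is correct and follows essentially the same route as the paper: choose a function on affine $Z$ vanishing on $\Supp(s)$ but not on any $f(C)$ (the paper does this by localizing at a closed point of $\Supp(s)$, you by prime avoidance), note a power kills $s$ by coherence, and apply Theorem~\ref{TKGNC} with $\cH=\cO_X$ to see that multiplication by that power is nevertheless injective on $R^qf_*\cL$. The one point worth making explicit is that $f^*t$ is a non-zero divisor on $X$ (so that $D$ is genuinely Cartier), which holds because the irreducible components of $X$ are themselves lc centers of $(X,B)$.
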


\begin{proof} 
Suppose by contradiction that $s\ne 0$. Choose a closed point $P\in \Supp(s)$. We shrink
$Z$ to an affine neighborhood of $P$. There exists a non-zero divisor $h\in \cO_{Z,P}$ which 
vanishes on $\Supp(s)$, but does not vanish identically on $f(C)$, for every lc center $C$ of 
$(X,B)$. There exists $n\ge 1$ such that $h^ns=0$ in $(R^qf_*\cL)_P$. 

After shrinking $Z$ near $P$, 
we may suppose that $0\ne s\in \Gamma(Z,R^qf_*\cL)$, $h\in \Gamma(Z,\cO_Z)$
is a non-zero divisor, $h^ns=0$, and $h$ is invertible at the generic point of $f(C)$, for every lc center
$C$ of $(X,B)$. Since $Z$ is affine, we have an isomorphism $\Gamma(Z,R^qf_*\cL)\simeq H^q(X,\cL)$.
Therefore the multiplication $\otimes f^*h^n\colon H^q(X,\cL)\to H^q(X,\cL)$ is not injective. But $f^*h\in \Gamma(X,\cO_X)$
is invertible at the generic point of each lc center of $(X,B)$. By Theorem~\ref{TKGNC} with $\cH=\cO_X$, 
the multiplication $\otimes f^*h\colon H^q(X,\cL)\to H^q(X,\cL)$ is injective. Contradiction!
\end{proof}

\begin{thm}[Ohsawa-Koll\'ar vanishing]\label{OKGNC}
Let $(X,B)$ be a GNC log variety, let $f\colon X\to Y$ be a proper morphism and $g\colon Y \to Z$ a projective
morphism.
Let $\cL$ be an invertible $\cO_X$-module such that $\cL^{\otimes r} \simeq \omega^{[r]}_{(X,B)}\otimes  
f^*\cA$, where $r\ge 1$ and $rB$ has integer coefficients, and $\cA$ is a $g$-ample invertible $\cO_Y$-module. 
Then $R^pg_*R^qf_*\cL=0$ for all $p>0,q\ge 0$.
\end{thm}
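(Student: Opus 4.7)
The plan is to prove Theorem~\ref{OKGNC} by induction on $\dim X$, with the base $\dim X=0$ being trivial. The statement is local on $Z$, so I would first reduce to $Z$ affine. For $n \gg 0$, I choose a general global section $s \in \Gamma(Y, \cA^{\otimes n})$ whose divisor $D_0$ avoids the $f$-images of the finitely many lc centers of $(X,B)$, and such that the pullback $D = f^{*}D_0$ is transverse to the GNC stratification of $X$, producing a GNC log variety $(D, B|_D)$ of dimension $\dim X - 1$. Using the residue formalism of~\cite{TFR2}, adjunction along $D$ gives
$$(\cL(D)|_D)^{\otimes r} \simeq \omega^{[r]}_{(D,B|_D)} \otimes (f|_D)^{*}(\cA|_{f(D)}),$$
so the hypotheses of Theorem~\ref{OKGNC} transfer verbatim to $(D, B|_D)$ with $\cL(D)|_D$, $f|_D$, and $g|_{f(D)}$; the inductive hypothesis then yields $R^{p}g_{*}R^{q}f_{*}(\cL(D)|_D) = 0$ for $p>0$, so the Leray spectral sequence for $h|_D = g\circ f|_D$ degenerates at $E_2$, giving $R^{i}h_{*}(\cL(D)|_D) = g_{*}\mathcal{N}^{i}$ where $\mathcal{N}^{i} = R^{i}f_{*}(\cL(D)|_D)$.

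For the same $n$, relative Serre vanishing gives $R^{p}g_{*}(R^{i}f_{*}\cL \otimes \cA^{\otimes n}) = 0$ for $p>0$, so the Leray spectral sequence for $\cL(D) = \cL \otimes f^{*}\cA^{\otimes n}$ also degenerates and $R^{i}h_{*}(\cL(D)) = g_{*}(R^{i}f_{*}\cL \otimes \cA^{\otimes n})$. Next I would apply Theorem~\ref{TKGNC} to $h = g\circ f$ with $\cH = f^{*}\cA^{\otimes n}$ and the section $f^{*}s$ (invertible at the generic point of every lc center of $(X,B)$ by the choice of $s$) to obtain an injection $R^{i}h_{*}\cL \hookrightarrow R^{i}h_{*}(\cL(D))$. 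The short exact sequence $0 \to \cL \to \cL(D) \to \cL(D)|_D \to 0$ then breaks into
$$0 \to R^{i}h_{*}\cL \to g_{*}(R^{i}f_{*}\cL \otimes \cA^{\otimes n}) \to g_{*}\mathcal{N}^{i} \to 0,$$
so the right arrow is surjective. In parallel, Theorem~\ref{TFGNC} makes multiplication by $s$ injective on $R^{q}f_{*}\cL$, yielding on $Y$ the short exact sequence $0 \to R^{q}f_{*}\cL \to R^{q}f_{*}\cL \otimes \cA^{\otimes n} \to \mathcal{N}^{q} \to 0$.

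Applying $Rg_{*}$ to this last sequence and invoking the Serre vanishing above, for $p \geq 2$ the long exact sequence gives $R^{p}g_{*}R^{q}f_{*}\cL \simeq R^{p-1}g_{*}\mathcal{N}^{q}$, which vanishes by the induction hypothesis; for $p = 1$ the surjection $g_{*}(R^{q}f_{*}\cL \otimes \cA^{\otimes n}) \twoheadrightarrow g_{*}\mathcal{N}^{q}$ obtained above kills the cokernel, forcing $R^{1}g_{*}R^{q}f_{*}\cL = 0$. The main obstacle will be the Bertini-type step in the GNC category, i.e.\ confirming that for a generic $s$ the Cartier divisor $D$ actually cuts out a GNC log variety $(D, B|_D)$ and that the adjunction isomorphism for $\omega^{[r]}$ holds along it; both should fall out of the inductive structure of GNC log varieties developed in Section~4 of the paper together with the residue theory of~\cite{TFR2}.
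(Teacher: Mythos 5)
Your proposal is correct and follows essentially the same route as the paper: induction on $\dim X$, a general member $S\in|\cA^{\otimes n}|$ with $T=f^*S$, adjunction to the GNC divisor $(T,B|_T)$ via the residue theory of~\cite{TFR2}, Theorem~\ref{TFGNC} to split the $R^qf_*$-sequence, Serre vanishing, the inductive hypothesis on $T$, and Theorem~\ref{TKGNC} applied to the composite $g\circ f$ to handle $p=1$. The only (cosmetic) difference is in the $p=1$ endgame, where the paper chases an injectivity diagram of Leray edge maps while you extract the surjectivity of $g_*(R^qf_*\cL\otimes\cA^{\otimes n})\to g_*R^qf_*(\cL(T)|_T)$ from the degenerate Leray spectral sequences; the Bertini/adjunction step you flag is exactly the point the paper settles by citing~\cite{TFR2}.
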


\begin{proof} We use induction on the dimension of $X$.
We may suppose $Z$ is affine. Replacing $r$ by a multiple, we may suppose $\cA$ is 
$g$-generated. Let $m$ be a sufficiently large integer, to be chosen later.
Let $S$ be the zero locus of a general global section of $\cA^{\otimes m}$. Denote $T=f^*S$. 

Consider the short exact sequence
$$
0\to \cL\to \cL(T)\to \cL(T)|_T\to 0.
$$
The connecting homomorphism $\partial\colon R^qf_*\cL(T)|_T\to R^{q+1}f_*\cL$ is zero by Theorem~\ref{TFGNC}, 
since the image is supported by $T$, which contains no lc center of $(X,B)$, and 
$\cL^{\otimes r} \simeq \omega^{[r]}_{(X,B)}$ locally over $Y$. Therefore the long exact sequence
in cohomology breaks up into short exact sequences
$$
0\to R^qf_*\cL\to R^qf_*\cL(T)\to R^qf_*\cL(T)|_T\to 0.
$$

We have $R^pg_*R^qf_*\cL(T)\simeq R^pg_*(R^qf_*\cL(S))\simeq R^pg_*(R^qf_*\cL\otimes \cA^m)$.
If $m$ is sufficiently large, Serre vanishing gives $R^pg_*R^qf_*\cL(T)=0$ for $p\ne 0$.
By~\cite{TFR2},  $(X,B+T)$ is 
a GNC log variety, $T$ is $S_2$ and there exists a natural boundary $B_T=B|_T$ such that 
$(T,B_T)$ is a GNC log variety, and codimension one residues glue to residue isomorphisms
$$
\Res^{[2r]}_{X\to T}\colon \omega^{[2r]}_{(X,B+T)}|_T \isoto \omega^{[2r]}_{(T,B_T)}.
$$
From $\cL(T)^{\otimes r}\simeq \omega^{[r]}_{(X,B+T)}\otimes f^*\cA$ we obtain
$\cL(T)|_T^{\otimes 2r}\simeq  \omega^{[2r]}_{(T,B_T)}\otimes (f|_T)^*(\cA|_S^{\otimes 2})$. Since 
$\dim T<\dim X$, we obtain by induction $R^pg_*R^qf_*\cL(T)|_T=0$ for $p\ne 0$.

From the short exact sequence above, we deduce $R^pg_*R^qf_*\cL=0$ for $p\ge 2$.
For $p=1$, consider the commutative diagram
\[ 
\xymatrix{
   R^{1+q}(g\circ f)_*\cL  \ar[r]^\beta   &   R^{1+q}(g\circ f)_*\cL(T)    \\
   R^1g_*R^qf_*\cL  \ar[r] \ar[u]  &  R^1g_*R^qf_*\cL(T)=0  \ar[u]   
} \]
The vertical arrows are injective, from the Leray spectral sequence.
The homomorphism $\beta$ is injective by Theorem~\ref{TKGNC}, 
since $T\in |f^*\cA^{\otimes m}|$ contains no lc centers of $(X,B)$.
A diagram chase gives $R^1g_*R^qf_*\cL=0$.
\end{proof}


\section{Inductive properties of GNC log varieties}


\begin{prop}\label{lcsadj}
Let $(X,B)$ be a GNC log variety. Let $Y=\LCS(X,B)$ and $(Y,B_Y)$ the n-wlc structure 
induced by glueing of codimension one residues. Then $B_Y=(B-B^{=1})|_Y$ and 
$(Y,B_Y)$ is a GNC log variety. If $2\mid r$ and $rB$ has integer coefficients,
then $\Res^{[r]}\colon \omega^{[r]}_{(X,B)}|_Y\isoto \omega^{[r]}_{(Y,B_Y)}$ is an isomorphism. 
Moreover, 
\begin{itemize}
\item[1)] Let $\pi\colon (\bar{X},B_{\bar{X}})\to (X,B)$ be the normalization of $X$, with induced
log variety structure (with log smooth support). Let $\bar{Y}=\LCS(\bar{X},B_{\bar{X}})$. Let 
$n\colon Y^n\to Y$ and $\bar{n}\colon \bar{Y}^n\to \bar{Y}$ be the normalizations. In the 
commutative diagram
\[ 
\xymatrix{
 \bar{X}  \ar[d]_\pi  & \bar{Y} \ar[d]_\pi \ar[l] & & \bar{Y}^n \ar[d]^g \ar[ll]_{\bar{n}} &   \\
 X & Y \ar[l]   & &      Y^n \ar[ll]_{n}   
} \]
each square is both cartesian and a push-out, and $g$ is an \'etale covering. 
With the log structures induced by glueing of codimension one residues, we obtain a 
commutative diagram of GNC log varieties and log crepant morphisms
\[ 
\xymatrix{
 (\bar{X} ,B_{\bar{X}}) \ar[d]_\pi  & (\bar{Y},B_{\bar{Y}}) \ar[d]_\pi \ar[l] & & (\bar{Y}^n,B_{\bar{Y}^n} ) \ar[d]^g \ar[ll]_{\bar{n}} &   \\
 (X,B) & (Y,B_Y) \ar[l]   & &       (Y^n,B_{Y^n}) \ar[ll]_{n}   
} \]
\item[2)] The lc centers of $(X,B)$ are the irreducible components of $X$ and the 
lc centers of $(Y,B_Y)$.
\end{itemize}
\end{prop}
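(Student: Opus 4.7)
The plan is to reduce to the case where $(X,B)$ coincides with a GNC local model, using~\cite{Ar69} together with the \'etale invariance of the $\LCS$ locus, of n-wlc structures, of normalizations, and of codimension one residues (with Lemma~\ref{ilc} to transport lc centers). On a local model $X=\cup_F\bA_F\subset\bA^N$ with $B=(\sum_{i\in\sigma}b_iH_i)|_X$ and log discrepancy function $\psi=\sum_{i\in\sigma}(1-b_i)m_i$, the lc centers are the invariant subvarieties $X_\tau$ ($\tau\in\Delta$) with $\psi\in\tau$: those with $\tau$ a facet are the irreducible components of $X$, while the remaining ones are the proper lc centers that make up $\LCS(X,B)$. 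Part 2) of the proposition will follow once the local structure of $(Y,B_Y)$ is identified and matched against these proper lc centers.

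A direct combinatorial analysis of $\Delta$ shows that the maximal proper lc centers come in two forms: $\bA_{F\cap F'}$ for distinct facets $F,F'$ (the contribution of $\Sing X$), and $\bA_{F\setminus i}$ for $i\in\sigma$ with $b_i=1$ (the contribution of $\Supp B^{=1}$). Setting $\sigma_Y=\{j\in\sigma:b_j<1\}$, the predicted boundary is $B_Y=\sum_{j\in\sigma_Y}b_jH_j|_Y=(B-B^{=1})|_Y$. This is consistent with codimension one residues: the residue along a component of $\bA_{F\setminus i}\subset Y$ with $b_i=1$ removes the factor $\frac{dz_i}{z_i}$ and leaves the remaining $b_jH_j$ untouched, so the glueing construction of~\cite[Theorem 5.9]{TFR2} yields the required isomorphism $\Res^{[r]}\colon\omega^{[r]}_{(X,B)}|_Y\isoto\omega^{[r]}_{(Y,B_Y)}$ for any $r\in(2\Z)_{>0}$ such that $rB$ is integral. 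I expect the most delicate step to be the verification that $(Y,B_Y)$ is itself a GNC local model. Axiom c) is immediate from the formula for $B_Y$, and axiom a) (the $S_2$-chain condition) is obtained by propagating chains in $X$ through intermediate facets of $Y$. Axiom b) requires a case analysis of codimension one faces $\tau_Y\prec F_Y$ containing $\sigma_Y$: depending on whether $F_Y$ has the form $F\cap F'$ or $F\setminus i$, one either further intersects with another facet of $X$ or deletes an additional index $j\in\sigma$ with $b_j=1$, and in each subcase axiom b) for $(X,B)$ is used to express the relevant codimension one face of $X$ as an intersection of two facets.

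For part 1), the normalizations are transparent on the local model: $\bar{X}=\sqcup_F\bA_F$, $\bar{Y}=\sqcup_F\LCS(\bA_F,B_{\bA_F})$, $Y^n$ is a disjoint union of affine spaces indexed by the facets of $Y$, and $\bar{Y}^n$ by pairs consisting of a facet of $Y$ together with a facet of $X$ containing it. The cartesian and push-out properties of each square are then verified by matching conductor ideals: both $Y\hookrightarrow X$ and $\bar{Y}\hookrightarrow\bar{X}$ are seminormal inclusions cut out by their conductors, and the conductors are compatible with normalization because of the explicit disjoint-union description. The map $g\colon\bar{Y}^n\to Y^n$ is \'etale since over each component $\bA_{F_Y}$ of $Y^n$ the fibre is the finite set of facets of $X$ containing $F_Y$, each contributing an isomorphic copy of $\bA_{F_Y}$ via the coordinate inclusion. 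Log crepancy of every arrow in the resulting diagram is then a consequence of Proposition~\ref{slv}.b), since all four log structures are controlled by the same log discrepancy function $\psi$ (or its projection to the appropriate cones), and the residues glue consistently.
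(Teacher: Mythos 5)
Your treatment of the main claim and of part 1) is essentially the paper's own proof: reduce to a GNC local model via \cite{Ar69}, observe that $\psi=\sum_{i\in\sigma}(1-b_i)m_i$ lies in the relative interior of $\sigma'=\{i\in\sigma: b_i<1\}$ so that $Y$ is the union of the $\bA_\tau$ over codimension-one faces $\tau\supseteq\sigma'$ (your two types $F\cap F'$ and $F\setminus i$ with $b_i=1$), check axioms a)--c) for $(Y,B_Y)$ with $B_Y=(B-B^{=1})|_Y$, make the four normalizations explicit as disjoint unions of affine spaces, and read off the cartesian/push-out properties, the \'etale covering $g$, and log crepancy from the fact that every log structure in sight has the same log discrepancy function $\psi$. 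No complaints there.

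The genuine gap is in part 2). You say it ``will follow once the local structure of $(Y,B_Y)$ is identified and matched against these proper lc centers,'' but being an lc center is not a formal-local property of a point: it is a statement about a global closed irreducible subvariety, and for the non-normal pair $(Y,B_Y)$ an lc center is by definition the image of an lc center of the normalization $(Y^n,B_{Y^n})$. The transport lemma you invoke (Lemma~\ref{ilc}) is stated only for \emph{normal} log pairs, so you cannot compare $(X,B)$ with its local model $(X',B')$ directly through the common \'etale neighborhood. The paper's proof of 2) therefore makes two further reductions beyond the local-model computation (its Step 1): Step 2 uses the diagram of part 1) together with Lemma~\ref{ilc} applied to the \'etale covering $g$ to replace $(X,B)$ by its normalization $(\bar{X},B_{\bar{X}})$, which has log smooth support; and Step 3, after cutting with general hyperplane sections to reduce the candidate center to a closed point, forms the cartesian square of normalized $\LCS$-loci over a common \'etale neighborhood and applies Lemma~\ref{ilc} to both horizontal (normal) arrows before invoking Artin approximation and Step 1. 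Without this double passage through normalizations, your matching of lc centers is verified only on completed local rings, which does not by itself identify images of lc centers of $Y^n$ with lc centers of $(X,B)$. The toolkit you list at the outset is the right one, but this part of the argument needs to be carried out, not asserted.
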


\begin{proof} 
1) We may suppose $(X,B)$ is a GNC local model.
Let $X=\cup_F\bA_F\hookrightarrow \bA^N$ and $B=\sum_{i\in \sigma}b_iH_i|_X$,
with core $\sigma=\cap_F F$. Denote $\sigma'=\{i\in \sigma;b_i<1\}$. Then $\psi=\sum_{i\in \sigma}(1-b_i)m_i=
\sum_{i\in \sigma'}(1-b_i)m_i$, which belongs to the relative interior of $\sigma'$. 
We deduce that $X_\gamma$ is an lc center of $(X,B)$ if and only if 
$\sigma'\prec \gamma\in \Delta$. Therefore $Y=\cup_\tau \bA_\tau\hookrightarrow \bA^N$ is an irreducible decomposition, 
where the union is taken after all codimension one faces $\tau\in \Delta$ which contain $\sigma'$. 
In particular, the core of $Y$ is $\sigma'$.  One checks that $(Y,0)$ satisfies properties a) and b) of the GNC 
local model. The boundary induced by codimension one residues is $B_Y=\sum_{i\in \sigma'}b_iH_i|_Y=(B-B^{=1})|_Y$, 
which satisfies c). The commutative diagram becomes
\[ 
\xymatrix{
 \sqcup_F \bA_F  \ar[d]_\pi  & \sqcup_F\cup_{\tau\prec F} \bA_\tau \ar[d]_\pi \ar[l] & & \sqcup_F\sqcup_{\tau\prec F} \bA_\tau \ar[d]^g \ar[ll]_{\bar{n}} &   \\
 \cup_F \bA_F & \cup_\tau \bA_\tau \ar[l]   & &      \sqcup_\tau \bA_\tau \ar[ll]_{n}   
} \]
and one checks that both squares are push-outs and cartesian, using axioms a) and b) of the GNC local models.
Over $\bA_\tau$, $g$ consists of several identical copies of $\bA_\tau$, one for each facet $F$ which contains $\tau$. Therefore $g$ is an \'etale covering.
All log structures have the same log discrepancy function $\psi$, hence the morphisms of the diagram are log crepant.

2) {\em Step 1}: The claim holds if $(X,B)$ is a GNC local model. Indeed, 
the lc centers of $(X,B)$ are the invariant cycles $X_\gamma$ such that 
$\psi\in \gamma$ and $\gamma\in \Delta$, and the lc centers of 
$(Y,B_Y)$ are the invariant cycles $X_\gamma$ such that 
$\psi\in \gamma$ and $\gamma\in \Delta$ is a face of positive codimension.

{\em Step 2}: We reduce the claim to the case when $(X,B)$ has log smooth support.
Indeed, consider the commutative diagram of log structures in 1).
The log structure on the normalization $(\bar{X}, B_{\bar{X}})$ has log smooth support. 
By Lemma~\ref{ilc} for $g$ and a diagram chase, the claim for $(X,B)$ and its $\LCS$-locus is equivalent to
the claim for $(\bar{X} ,B_{\bar{X}})$ and its $\LCS$-locus.

{\em Step 3}: Let $(X,B)$ have log smooth support. Then $Y=B^{=1}$ and the induced boundary 
is $B_Y=(B-Y)|_Y$. We have to show that for a closed subset $Z\subseteq Y$, $Z$ is an lc center
of $(X,B)$ if and only if $Z$ is an lc center of $(Y,B_Y)$, i.e. the image of an lc center of the normalization
$(Y^n,B_{Y^n})$. We may cut with general hyperplane sections, and suppose $Z$ is a closed point $P$.
Note that if $f\colon (X',B_{X'})\to (X,B)$ is \'etale log crepant, then $Y'=f^*Y$, and since normalization
commutes with \'etale base change, we obtain a cartesian diagram 
\[ 
\xymatrix{
 ({Y'}^n,B_{{Y'}^n}) \ar[d]_{n'}  \ar[r]^g & (Y^n,B_{Y^n})\ar[d]_n   \\
 (X',B_{X'}) \ar[r]^f  & (X,B)    
} \]
with $f,g$ \'etale log crepant. By Lemma~\ref{ilc} for $f$ and $g$, the claim holds for $n$ if and only if it holds for $n'$. 
By the existence of a common \'etale neighborhood~\cite{Ar69} and Step 1, we are done.
\end{proof}

\begin{cor}
Let $X$ be a GNC log variety. Then $S=\Sing X$ coincides with the non-normal locus of $X$, and
with $\LCS(X,0)$. The n-wlc structure induced by glueing of codimension one residues is 
$(S,0)$, a GNC log variety, and $\Res^{[2]}\colon \omega^{[2]}_X|_S\isoto \omega^{[2]}_{S}$ 
is an isomorphism. Moreover, 
\begin{itemize}
\item[1)] Let $\pi\colon (\bar{X},\bar{C})\to (X,0)$ be the normalization of $X$, with induced
log variety structure (with log smooth support). Note that $\bar{C}=\LCS(\bar{X},\bar{C})$. Let 
$n\colon S^n\to S$ and $\bar{n}\colon \bar{C}^n\to \bar{C}$ be the normalizations. In the 
commutative diagram
\[ 
\xymatrix{
 \bar{X}  \ar[d]_\pi  & \bar{C} \ar[d]_\pi \ar[l] & & \bar{C}^n \ar[d]^g \ar[ll]_{\bar{n}} &   \\
 X & S \ar[l]   & &      S^n \ar[ll]_{n}   
} \]
each square is both cartesian and a push-out, and $g$ is an \'etale covering. 
With the log structures induced by glueing of codimension one residues, we obtain a 
commutative diagram of GNC log varietis and log crepant morphisms
\[ 
\xymatrix{
 (\bar{X}, \bar{C}) \ar[d]_\pi  & (\bar{C},0) \ar[d]_\pi \ar[l] & & (\bar{C}^n,\Cond \bar{n} ) \ar[d]^g \ar[ll]_{\bar{n}} &   \\
 (X,0) & (S,0) \ar[l]   & &       (S^n,\Cond n) \ar[ll]_{n}   
} \]
\item[2)] The lc centers of $X$ are the irreducible components of $X$ and the lc centers of $S$.
\end{itemize}
\end{cor}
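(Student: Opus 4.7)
The plan is to apply Proposition~\ref{lcsadj} to the GNC log variety $(X,0)$, and then make the appropriate identifications when the boundary vanishes.

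First, I would specialize Proposition~\ref{lcsadj} to $B=0$. Setting $Y=\LCS(X,0)$, the proposition immediately produces an n-wlc structure $(Y,B_Y)$ glued from codimension one residues, with $B_Y=(B-B^{=1})|_Y=0$, so the induced structure is exactly $(Y,0)$. The proposition also asserts that $(Y,0)$ is a GNC log variety, and gives the residue isomorphism $\Res^{[r]}\colon \omega^{[r]}_X|_Y\isoto \omega^{[r]}_Y$ for every even $r\ge 1$; taking $r=2$ yields the stated isomorphism.

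Next, I would identify $Y=\LCS(X,0)$ with $\Sing X$ and with the non-normal locus. This is local on $X$, so I work in a GNC local model $X=\cup_F\bA_F\subset \bA^N$ with core $\sigma=\cap_F F$. For $B=0$ the log discrepancy function is $\psi=\sum_{i\in\sigma}m_i\in\relint\sigma$, and the lc centers are the invariant cycles $X_\gamma$ with $\sigma\prec\gamma\in\Delta$; thus $\LCS(X,0)$ is the union over all such $\gamma$ that are \emph{not} facets, which by axiom b) of GNC local models equals $\cup_\tau\bA_\tau$ with $\tau$ running over the codimension one intersections $F\cap F'$ of distinct facets. On the other hand, each irreducible component $\bA_F$ is smooth, so $\Sing X$ is supported exactly where two facets meet; this also coincides with the non-normal locus since a GNC variety fails to be normal precisely where several components come together. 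Hence $\Sing X$, the non-normal locus, and $\LCS(X,0)$ all agree.

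For part (1), I would specialize the commutative diagram from Proposition~\ref{lcsadj}. When $B=0$ the induced log structure on the normalization $\bar X$ has the conductor as boundary: $B_{\bar X}=\bar C$, so $\LCS(\bar X,\bar C)=\bar C$ (the reduced boundary) and $B_{\bar Y}=B_{\bar C}=0$. Likewise, pulling back through the \'etale cover $g\colon \bar C^{\,n}\to S^n$ and applying Proposition~\ref{slv} (or the residue glueing on the normalization of a normal crossings pair) identifies the induced boundary on $\bar C^{\,n}$, respectively $S^n$, with the conductor of $\bar n$, respectively $n$. The fact that each square is cartesian, a push-out, and that $g$ is an \'etale covering is transported verbatim from Proposition~\ref{lcsadj}.1). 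Finally, part (2) is a direct restatement of Proposition~\ref{lcsadj}.2) applied to $(X,0)$, using $Y=S$.

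The only real work is the combinatorial identification in the local model, but this is immediate from the description of the faces of $\Delta$ containing the core $\sigma$; no genuine obstacle is expected.
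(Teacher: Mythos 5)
Your proposal is correct and follows essentially the same route as the paper: everything except the identification $S=\Sing X=\text{(non-normal locus)}=\LCS(X,0)$ is read off from Proposition~\ref{lcsadj} with $B=0$, and that identification is checked on a GNC local model (normal $\Leftrightarrow$ one facet $\Leftrightarrow$ smooth, since each component $\bA_F$ is smooth, and the non-facet lc centers sweep out exactly the codimension-one intersections $F\cap F'$). The only stylistic difference is that the paper splits the inclusion $\LCS(X,0)=S$ into $\LCS(X,0)\subseteq S$ (because $X\setminus S$ is smooth) and $S\subseteq\LCS(X,0)$ (each component of $S$ is a component of the conductor, hence an lc center), whereas you compute both loci directly in the local model; both arguments are fine.
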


\begin{proof}
It remains to check that $S=C=\LCS(X,0)$. First of all, we claim that $S=C$. Indeed, let 
$x\in X$. We show that $\cO_{X,x}$ is normal if and only if $\cO_{X,x}$ is nonsingular.
We may suppose $x\in X$ is a local model $X=\cup_F X_F$ and $x$ belongs to the closed
orbit of $X$. Then $\cO_{X,x}$ is normal if and only if there is only one facet $F$. As 
$X_F$ is smooth, the latter is equivalent to $\cO_{X,x}$ being smooth.

Since $X\setminus S$ is smooth, $\LCS(X,0)\subseteq S$.
On the other hand, each irreducible component $Q$ of $S$ is an irreducible 
component of $C$. Therefore $Q$ is an lc center. We conclude that $\LCS(X,0)=S$.
\end{proof}

\begin{rem}
Let $(X,B)$ be a GNC log variety. Let $S=\Sing X$ and $B_S=B|_S$. One can also show
that $(S,B_S)$ is a GNC log variety, induced by codimension one residues. 
If $2\mid r$ and $rB$ has integer coefficients, the glueing of codimension one residues induces an isomorphism
$\Res^{[r]}\colon \omega^{[r]}_{(X,B)}|_S\isoto \omega^{[r]}_{(S,B_S)}$.
\end{rem}

\begin{lem}
Let $(X,B)$ be a GNC log variety. 
Let $\pi\colon (\bar{X} ,B_{\bar{X}})\to (X,B)$ be the normalization of $X$, with the induced log variety structure.
Let $Y=\LCS(X,B)$. Let $Z$ be a union of lc centers of $(X,B)$.
\begin{itemize}
\item[1)] $Z\cap Y$ is a union of lc centers of $(Y,B_Y)$.
\item[2)] $\pi^{-1}(Z)$ is a union of lc centers of $(\bar{X},B_{\bar{X}})$.
\item[3)] We have a short exact sequence
$
0\to \cI_{Z\cup Y\subset X} \to \cI_{Z\subset X} \stackrel{|_Y}{\to} \cI_{Z\cap Y\subset Y} \to 0.
$
\end{itemize}
\end{lem}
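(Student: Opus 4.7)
My plan is to reduce all three assertions to a computation in the GNC local model, where lc centers, the $\LCS$-locus, the normalization, and the relevant ideals are all described combinatorially, and then to lift the conclusions back to $(X,B)$ via Lemma~\ref{ilc} applied in a common \'etale neighborhood. In the local model $X=\cup_F \bA_F\subset \bA^N$ with core $\sigma$, $B=(\sum_{i\in\sigma}b_iH_i)|_X$, and $\sigma'=\{i\in\sigma:b_i<1\}$, the analysis in the proof of Proposition~\ref{lcsadj} shows that the lc centers of $(X,B)$ are exactly the cycles $X_\gamma$ with $\sigma'\prec\gamma\in\Delta$, that $Y=\cup_\tau \bA_\tau$ where $\tau$ runs over codimension one faces of $\Delta$ containing $\sigma'$, and that each such $\tau$ is a facet of $Y$.

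For (1), I would write $Z=\cup_i X_{\gamma_i}$ in the local model. The key calculation is that, scheme-theoretically inside $\bA^N$, $X_{\gamma_i}\cap X_\tau=X_{\gamma_i\cap \tau}$, because
$$
(z_j:j\notin\gamma_i)+(z_j:j\notin\tau)=(z_j:j\notin\gamma_i\cap\tau)
$$
is again a radical monomial ideal. Since $\sigma'$ is contained in both $\gamma_i$ and $\tau$, we have $\sigma'\prec\gamma_i\cap\tau\prec\tau$, and since $\tau$ is a facet of $Y$, the face $\gamma_i\cap\tau$ lies in the fan of $Y$, so $X_{\gamma_i\cap\tau}$ is an lc center of $(Y,B_Y)$. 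Therefore $Z\cap Y=\cup_{i,\tau}X_{\gamma_i\cap\tau}$ is a union of lc centers of $(Y,B_Y)$. For (2), in the local model $\bar{X}=\sqcup_F\bA_F$ is log smooth, and $\pi^{-1}(X_{\gamma_i})=\sqcup_{F\succ\gamma_i}\bA_{\gamma_i}$ is a disjoint union of strata of the reduced part of $B_{\bar{X}}$, hence a disjoint union of lc centers; taking unions over $i$ yields (2). In both cases, Lemma~\ref{ilc} transfers the conclusion from the common \'etale neighborhood to $X$.

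For (3), the sequence is essentially formal once the ideals are correctly identified. I would start from the tautological short exact sequence
$$
0\to \cI_{Z\subset X}\cap \cI_{Y\subset X}\to \cI_{Z\subset X}\to (\cI_{Z\subset X}+\cI_{Y\subset X})/\cI_{Y\subset X}\to 0,
$$
identify $\cI_{Z\subset X}\cap \cI_{Y\subset X}$ with $\cI_{Z\cup Y\subset X}$ (immediate, since both ideals are radical and $Z\cup Y$ is reduced), and identify the quotient with $\cI_{Z\cap Y\subset Y}$. The latter amounts to showing that the scheme-theoretic intersection $Z\cap Y\subset Y$ is reduced, so that its ideal in $\cO_Y$ coincides with $(\cI_{Z\subset X}+\cI_{Y\subset X})/\cI_{Y\subset X}$. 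After reducing to the local model, this follows because $\cI_{Z\subset X}$ and $\cI_{Y\subset X}$ are intersections of squarefree monomial ideals, and the sum of two squarefree monomial ideals in $k[z_1,\dots,z_N]$ is again squarefree, hence radical.

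The main obstacle I anticipate is this scheme-theoretic reducedness check in (3); parts (1) and (2) are combinatorially clear in the local model, but (3) requires verifying that intersecting $Z$ with $Y$ introduces no embedded or nilpotent structure, which is precisely what the squarefree monomial ideal computation delivers. Once the local calculation is in place, \'etale descent via Lemma~\ref{ilc} and Artin approximation returns the conclusion for a general GNC log variety, exactly as in Proposition~\ref{lcsadj} and Lemma~\ref{slv2}.
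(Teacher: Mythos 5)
Your strategy is genuinely different from the paper's. For (1) and (2) the paper argues structurally: it reduces to the case of a single lc center, treats separately the cases $Z\subseteq Y$ and $Z$ an irreducible component, and for (2) runs an induction on $\dim X$ through the diagram of Proposition~\ref{lcsadj}, using that $g$ is finite flat. For (3) it invokes Leahy--Vitulli: the surjectivity of $\cI_{Z\subset X}\to\cI_{Z\cap Y\subset Y}$ is recast as a push-out property, which follows from weak normality of $Y\cup Z$, verified via the normalization. Your route --- a direct squarefree-monomial computation in the GNC local model, followed by \'etale descent --- is the local-model shadow of the same facts, and it is viable: radicality of $\cI_{Z\subset X}+\cI_{Y\subset X}$ is exactly what makes the restriction map hit all of $\cI_{Z\cap Y\subset Y}$, and reducedness can indeed be tested on completions of excellent rings. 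What the paper's route buys is that the global bookkeeping is done once and for all by Lemma~\ref{ilc} applied to a single morphism at a time; what your route buys is a uniform, very concrete verification of all three statements at once.

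Two points need repair. First, your formula in (2) is wrong: $\pi^{-1}(X_{\gamma_i})$ is not $\sqcup_{F\succ\gamma_i}\bA_{\gamma_i}$. On a component $\bA_F$ of $\bar X$ with $F\not\supseteq\gamma_i$ the fiber is $\bA_F\cap\bA_{\gamma_i}=\bA_{F\cap\gamma_i}$, which is nonempty (it contains $\bA_{\sigma'}$), so the correct identity is $\pi^{-1}(X_{\gamma_i})=\sqcup_F\bA_{F\cap\gamma_i}$. The conclusion survives, because for $j\in F\setminus\gamma_i$ one has $j\notin\sigma'$, so $\bA_{F\setminus j}$ occurs in $B_{\bA_F}$ with coefficient one, and hence $\bA_{F\cap\gamma_i}$ is a stratum of $B_{\bA_F}^{=1}$, i.e.\ an lc center of $(\bA_F,B_{\bA_F})$ --- but as stated your description of the preimage misses all these lower-dimensional pieces (already for the two coordinate axes in $\bA^2$ and $Z$ one axis, the fiber over the origin in the other branch is lost). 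Second, the descent step is asserted rather than proved, and it is not purely formal: the preimage of $Z$ in a common \'etale neighborhood is a union of \emph{some} components of the preimages of lc centers of the local model, not the full preimage of a union of lc centers of $(X',B')$, so to identify $\cI_{Z}^{\wedge}+\cI_{Y}^{\wedge}$ with a sum of squarefree monomial ideals you must first localize at a point, use analytic irreducibility of the smooth lc centers $\bA_\gamma$ to see that exactly one branch survives, and only then invoke the monomial computation for the resulting union $Z''$ of local-model lc centers; similarly, ``union of lc centers'' must be tested at a general point of each irreducible component, with Lemma~\ref{ilc} applied both to $U\to X$ and to $U\to X'$ (and to the induced \'etale log crepant maps on normalizations and on $\LCS$-loci). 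This is the same kind of care the paper takes in Step 3 of the proof of Proposition~\ref{lcsadj}, and your write-up should carry it out explicitly before the argument can be considered complete.
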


\begin{proof} 
1) We may suppose $Z$ is an lc center.
If $Z\subseteq Y$, the claim is clear. Therefore we may suppose $Z$ is an irreducible 
component of $X$. Then the normalization $\bar{Z}$ of $Z$ is an irreducible component
of the normalization $\bar{X}$ of $X$. We have $\pi^{-1}(Y)=\bar{Y}=\LCS(\bar{X},B_{\bar{X}})$.
Therefore $Z\cap Y=\pi(\bar{Z}\cap \bar{Y})$. We have $\bar{Z}\cap \bar{Y}=\LCS(\bar{X},B_{\bar{X}})|_{\bar{Z}}$,
we deduce that $\bar{Z}\cap \bar{Y}$ is a union of lc centers of $(\bar{X},B_{\bar{X}})$ contained in
$\bar{Y}$. Therefore $Z\cap Y$ is a union of lc centers of $(X,B)$ contained in $Y$, hence lc
centers of $(Y,B_Y)$, by Proposition~\ref{lcsadj}.

2) We use induction on $\dim X$. We may suppose $Z$ is an lc center.
If $Z$ is an irreducible component of $X$, then its normalization $\bar{Z}$ is 
an irreducible component of $\bar{X}$, and $\pi^{-1}(Z)=\bar{Z}\cup \pi^{-1}(Z\cap Y)$,
since $Y$ contains the non-normal locus of $X$. By induction, the claim holds.

Suppose $Z$ is not an irreducible component of $X$. Then $Z\subseteq Y$, by Proposition~\ref{lcsadj}.
By induction, $n^{-1}(Z)$ is a union of lc centers of $(Y^n,B_{Y^n})$. Let $W$ be such an lc center.
Since $g$ is finite flat, each irreducible component of $g^{-1}(W)$ dominates $W$. Therefore 
$g^{-1}n^{-1}(Z)$ is a union of lc centers of $(\bar{Y}^n,B_{\bar{Y}^n} )$, by Lemma~\ref{ilc}.
Equivalently, $\bar{n}^{-1}\pi^{-1}(Z)$ is a union of lc centers of $(\bar{Y}^n,B_{\bar{Y}^n} )$.
Therefore $\pi^{-1}(Z)$ is a union of lc centers of $(\bar{Y},B_{\bar{Y}})$. The latter lc centers 
are also lc centers of $(\bar{X} ,B_{\bar{X}})$. 

3) The sequence is exact if and only if $|_Y\colon \cI_{Z\subset X} \to \cI_{Z\cap Y\subset Y}$ is surjective,
if and only if $|_Y\colon \cI_{Z\subset Z\cup Y} \to \cI_{Z\cap Y\subset Y}$ is surjective,
if and only if the diagram 
\[ 
\xymatrix{
  Y  \ar[d]  &  Y\cap Z  \ar[d] \ar[l]    \\
  Y\cup Z         &    Z \ar[l]   
} \]
is a push-out. By~\cite{LV81}, this diagram is a push-out if $Y\cup Z$ is weakly normal.
To show this, consider the normalization $\pi\colon \bar{X}\to X$. Denote $W=\pi^{-1}(Y\cup Z)$.
Since 
\[ 
\xymatrix{
  \bar{X}  \ar[d]  &  \bar{Y}  \ar[d] \ar[l]    \\
  X         &    Y \ar[l]   
} \]
is a push-out and $Y\cup Z$ contains $Y$, the diagram
\[ 
\xymatrix{
  \bar{X}  \ar[d]  &  W  \ar[d] \ar[l]    \\
  X         &    Y\cup Z     \ar[l]   
} \]
is also a push-out.
But $\bar{X}$ is smooth, and $W$ is the union of $\bar{Y}$ with the irreducible components of 
$\bar{X}$ which are mapped into $Z$. Therefore the singularities of $W$ are at most normal crossings.
We conclude that $X,\bar{X},W$ are weakly normal. From the last push-out diagram, we 
deduce that $Y\cup Z$ is weakly normal as well.
\end{proof}

The results of this section can be used to reduce Koll\'ar's torsion freeness theorem and 
Ohsawa-Koll\'ar vanishing theorem from the GNC varieties to log smooth varieties. This 
is done by a using the push-out and cartesian diagram obtained from normalization and 
restriction to the $\LCS$-locus. We were unable to use the same argument to reduce the injectivity 
theorems from GNC varieties to log smooth varieties, but we expect this is possible.


\end{document}